\newtheorem{theorem}{Theorem}[section]
\newtheorem{lemma}[theorem]{Lemma}
\newtheorem{proposition}[theorem]{Proposition}
\newtheorem{corollary}[theorem]{Corollary}
\theoremstyle{definition}
\newtheorem{definition}[theorem]{Definition}
\newtheorem{example}[theorem]{Example}
\newtheorem*{question}{Question}
\theoremstyle{remark}
\newtheorem{remark}[theorem]{Remark}
\numberwithin{equation}{section}
\title[Subcategories of extension modules by Serre subcategories]{Subcategories of extension modules by Serre subcategories}
\author{Takeshi Yoshizawa}
\address{Graduate School of Natural Science and Technology, Okayama University, Okayama 700-8530, Japan}
\email{tyoshiza@math.okayama-u.ac.jp}
\subjclass[2010]{13C60, 13D45}
\keywords{Extension module, Local cohomology, Serre subcategory}
\begin{document}
\begin{abstract}
In 1962, P.\ Gabriel showed the existence of a lattice isomorphism  
between the set of Serre subcategories of the category consisting of finitely generated modules 
and the set of specialization closed subsets of the set of prime ideals.  
In this paper, 
we consider subcategories consisting of the extensions of modules in two given Serre subcategories 
to find a method of constructing Serre subcategories of the category of modules. 
We shall give a criterion for this subcategory to be a Serre subcategory. 
\end{abstract}
\maketitle
\section*{Introduction}
\thispagestyle{empty}
Let $R$ be a commutative noetherian ring. 
We denote by $R\text{-}\mathrm{Mod}$ the category of $R$-modules and by $R\text{-}\mathrm{mod}$ the full subcategory of finitely generated $R$-modules. 

In \cite{G}, 
it was proved that one has a lattice isomorphism between the set of Serre subcategories of $R\text{-}\mathrm{mod}$ and the set of specialization closed subsets of $\mathrm{Spec}(R)$ by P.\ Gabriel.
On the other hand,  
A.\ Neeman showed the existence of an isomorphism of lattices between the set of smashing subcategories of the derived category of $R\text{-}\mathrm{Mod}$ 
and the set of specialization closed subsets of $\mathrm{Spec}(R)$ in \cite{Neeman-1992}. 
Later, 
R.\ Takahashi constructed a module version of Neeman's theorem which induces Gabriel's theorem in \cite{Takahashi-2008}. 

A Serre subcategory of an abelian category is defined to be a full subcategory which is closed under subobjects, quotient objects and extensions. 
Recently, 
many authors study the notion of Serre subcategory not only in the category theory but also in the local cohomology theory. (For example see \cite{AM-2008}.)
By above Gabriel's result, we can give all Serre subcategories of $R\text{-}\mathrm{mod}$ by considering specialization closed subsets of $\mathrm{Spec}(R)$. 
However, 
we want to know other way of constructing Serre subcategories of $R\text{-}\mathrm{Mod}$ with a view of treating it in local cohomology theory. 
Therefore, one of the main purposes in this paper is to give this way by considering subcategories consisting of the extensions of modules in two given Serre subcategories.

\vspace{5pt}
To be more precise, let $\mathcal{S}_{1}$ and $\mathcal{S}_{2}$ be Serre subcategories of $R\text{-}\mathrm{Mod}$. 
We consider a subcategory consisting of extension modules of $\mathcal{S}_{1}$ by $\mathcal{S}_{2}$, that is

\[ 
(\mathcal{S}_{1}, \mathcal{S}_{2}) = \left\{ M \in R\text{-}\mathrm{Mod} \text{ \Large $\mid$ }  
\begin{matrix} \text{there are $S_{1} \in \mathcal{S}_{1}$ and $S_{2} \in \mathcal{S}_{2}$ such that} \cr 
\minCDarrowwidth1pc \begin{CD}0 @>>> S_{1} @>>> M @>>> S_{2} @>>> 0\end{CD} \text{~is exact.}\cr \end{matrix} \right\}. 
\]

\vspace{3pt}
\noindent
For example, modules of a subcategory $(\mathcal{S}_{f. g. }, \mathcal{S}_{Artin })$ are known as Minimax modules 
where $\mathcal{S}_{f. g. }$ is the Serre subcategory consisting of finitely generated $R$-modules and $\mathcal{S}_{Artin }$ is the Serre subcategory consisting of Artinian $R$-modules. 
In \cite{BN-2008}, 
K.\ Bahmanpour and R.\ Naghipour showed that the subcategory $(\mathcal{S}_{f. g. }, \mathcal{S}_{Artin })$ is a Serre subcategory. 
However, a subcategory $(\mathcal{S}_{1}, \mathcal{S}_{2})$ needs not be Serre. 
In fact, a subcategory $(\mathcal{S}_{Artin }, \mathcal{S}_{f. g. })$ is not a Serre subcategory. 
Therefore, we shall give a necessary and sufficient condition for a subcategory $(\mathcal{S}_{1}, \mathcal{S}_{2})$ to be a Serre subcategory  
and several examples of Serre subcategory $(\mathcal{S}_{1}, \mathcal{S}_{2})$. 
In particular, we shall see that the following subcategories are Serre subcategories: 
\begin{enumerate}

\item\, A subcategory $(\mathcal{S}_{1}, \mathcal{S}_{2})$ for Serre subcategories $\mathcal{S}_{1}$ and $\mathcal{S}_{2}$ of $R\text{-}\mathrm{mod}$; 

\vspace{3pt}
\item\, A subcategory $(\mathcal{S}_{f. g. }, \mathcal{S})$ for a Serre subcategory $\mathcal{S}$ of $R\text{-}\mathrm{Mod}$;  

\vspace{3pt}
\item\, A subcategory $(\mathcal{S}, \mathcal{S}_{Artin })$ for a Serre subcategory $\mathcal{S}$ of $R\text{-}\mathrm{Mod}$. 

\end{enumerate}

\vspace{7pt}
The organization of this paper is as follows.
 
In section 1, 
we shall define the notion of subcategories $(\mathcal{S}_{1}, \mathcal{S}_{2})$ of extension modules related to Serre subcategories $\mathcal{S}_{1}$ and $\mathcal{S}_{2}$ of $R\text{-}\mathrm{Mod}$ and study basic properties.  

In section 2, we shall treat Serre subcategories of $R\text{-}\mathrm{mod}$. 
In particular, we show the above example (1) is a Serre subcategory of $R\text{-}\mathrm{mod}$. (Theorem \ref{serre-Rmod}.) 

The section 3 is a main part of this paper. 
We give a  necessary and sufficient condition for a subcategory $(\mathcal{S}_{1}, \mathcal{S}_{2})$ to be a Serre subcategory. (Theorem \ref{serre}.) 

In section 4, 
we apply our argument to the local cohomology theory involving the condition $(C_{I})$ for an ideal $I$ of $R$ which is defined by M.\ Aghapournahr and  L.\ Melkersson in \cite{AM-2008}.

\vspace{7pt}
\section{The definition of a subcategory of extension modules by Serre subcategories}
Throughout this paper, all rings are commutative noetherian and all modules are unitary. 
We denote by $R\text{-}\mathrm{Mod}$ the category of $R$-modules and by $R\text{-}\mathrm{mod}$ the full subcategory of finitely generated $R$-modules. 
We assume that all full subcategories $\mathcal{X}$ of an abelian category $\mathcal{A}$ are closed under isomorphisms, 
that is if $X$ is an object of $\mathcal{X}$ and $Y$ is an object of $\mathcal{A}$ which is isomorphic to $X$, then $Y$ is also an object of $\mathcal{X}$. 
We recall that a full subcategory of an abelian category is said to be Serre subcategory if it is closed under subobjects, quotient objects and extensions.

\vspace{5pt}
In this section, 
we shall give the definition of a subcategory of extension modules by Serre subcategories 
and study basic properties. 

\begin{definition}
Let $\mathcal{S}_1$ and $\mathcal{S}_2$ be Serre subcategories of $R\text{-}\mathrm{Mod}$. 
We denote by $(\mathcal{S}_1, \mathcal{S}_2)$ a subcategory consisting of $R$-modules $M$ with a short exact sequence  
\[ 0 \to S_{1} \to M \to S_{2} \to 0\]
of $R$-modules where each $S_{i}$ is in $\mathcal{S}_{i}$, that is
\[ 
(\mathcal{S}_{1}, \mathcal{S}_{2}) = \left\{ M \in R\text{-}\mathrm{Mod} \text{ \Large $\mid$ }  
\begin{matrix} \text{there are $S_{1} \in \mathcal{S}_{1}$ and $S_{2} \in \mathcal{S}_{2}$ such that} \cr 
\minCDarrowwidth1pc \begin{CD}0 @>>> S_{1} @>>> M @>>> S_{2} @>>> 0\end{CD} \text{~is exact.}\cr \end{matrix} \right\}. 
\]
We shall refer to a subcategory $(\mathcal{S}_{1}, \mathcal{S}_{2})$ as a subcategory of extension modules of $\mathcal{S}_{1}$ by $\mathcal{S}_{2}$. 
\end{definition}

\begin{remark}\label{basic}
Let $\mathcal{S}_1$ and $\mathcal{S}_2$ be Serre subcategories of $R\text{-}\mathrm{Mod}$. 

\vspace{3pt}
\noindent
(1)\, Since the zero module belongs to any Serre subcategory, 
one has $\mathcal{S}_{1} \subseteq (\mathcal{S}_{1}, \mathcal{S}_{2})$ and $\mathcal{S}_{2}\subseteq (\mathcal{S}_{1}, \mathcal{S}_{2})$. 

\vspace{3pt}
\noindent
(2)\, It holds $\mathcal{S}_{1} \supseteq \mathcal{S}_{2}$ if and only if $(\mathcal{S}_{1}, \mathcal{S}_{2})=\mathcal{S}_{1}$.

\vspace{3pt}
\noindent
(3)\, It holds $\mathcal{S}_{1} \subseteq \mathcal{S}_{2}$ if and only if  $(\mathcal{S}_{1}, \mathcal{S}_{2})=\mathcal{S}_{2}$.

\vspace{3pt}
\noindent
(4)\, A subcategory $(\mathcal{S}_{1},\mathcal{S}_{2})$ is closed under finite direct sums. 
\end{remark}

\begin{example}
We denote by $\mathcal{S}_{f. g. }$ the subcategory consisting of finitely generated $R$-modules and by $\mathcal{S}_{Artin }$ the subcategory consisting of Artinian $R$-modules. 
Then a subcategory $(\mathcal{S}_{f. g. }, \mathcal{S}_{Artin })$ is known as the subcategory consisting of Minimax $R$-modules 
and a subcategory $(\mathcal{S}_{Artin }, \mathcal{S}_{f. g. })$ is known as the subcategory consisting of Maxmini $R$-modules.   
\end{example}

\vspace{3pt}
%
%
\begin{proposition}\label{cusq}
Let $\mathcal{S}_{1}$ and  $\mathcal{S}_{2}$ be Serre subcategories of $R\text{-}\mathrm{Mod}$. 
Then a subcategory $(\mathcal{S}_{1}, \mathcal{S}_{2})$ is closed under submodules and quotient modules.
\end{proposition}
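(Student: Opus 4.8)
The plan is to work directly from the defining short exact sequence and to exploit only two facts: that any $M \in (\mathcal{S}_{1}, \mathcal{S}_{2})$ sits in a sequence $0 \to S_{1} \to M \to S_{2} \to 0$ with $S_{i} \in \mathcal{S}_{i}$, and that each $\mathcal{S}_{i}$, being Serre, is already closed under submodules and quotient modules. The whole argument is then an exercise in the isomorphism theorems: given a submodule or quotient of $M$, I construct a new short exact sequence witnessing membership in $(\mathcal{S}_{1}, \mathcal{S}_{2})$, whose outer terms are built from $S_{1}$ and $S_{2}$ by passing to sub- or quotient objects.

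For closure under submodules, fix $M \in (\mathcal{S}_{1}, \mathcal{S}_{2})$ with a sequence $0 \to S_{1} \to M \xrightarrow{\pi} S_{2} \to 0$, and identify $S_{1}$ with its image in $M$. Let $N \subseteq M$ be a submodule. First I would form $N \cap S_{1}$, which is a submodule of $S_{1}$ and hence lies in $\mathcal{S}_{1}$. Then I would observe the isomorphism $N/(N \cap S_{1}) \cong (N + S_{1})/S_{1}$, which exhibits $N/(N \cap S_{1})$ as a submodule of $M/S_{1} \cong S_{2}$, hence an object of $\mathcal{S}_{2}$. The resulting sequence $0 \to N \cap S_{1} \to N \to N/(N \cap S_{1}) \to 0$ shows $N \in (\mathcal{S}_{1}, \mathcal{S}_{2})$.

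For closure under quotient modules, write an arbitrary quotient as $L = M/K$ for a submodule $K \subseteq M$, and push the filtration of $M$ forward along $M \twoheadrightarrow L$. Concretely, I would take the image of $S_{1}$ in $L$, namely $(S_{1} + K)/K \cong S_{1}/(S_{1} \cap K)$, which is a quotient of $S_{1}$ and so lies in $\mathcal{S}_{1}$. The cokernel is $L/\bigl((S_{1} + K)/K\bigr) \cong M/(S_{1} + K)$, a quotient of $M/S_{1} \cong S_{2}$, hence an object of $\mathcal{S}_{2}$. Assembling these into $0 \to (S_{1} + K)/K \to L \to M/(S_{1} + K) \to 0$ gives $L \in (\mathcal{S}_{1}, \mathcal{S}_{2})$.

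I do not anticipate a genuine obstacle here; the statement is essentially a bookkeeping consequence of the Serre property of the two outer categories, and the two halves are mirror images of each other under the submodule/quotient duality. The one place to be careful is the direction in which each closure property is applied: in the submodule case one uses that $\mathcal{S}_{1}$ is closed under submodules (for $N \cap S_{1}$) and $\mathcal{S}_{2}$ under submodules (for the image in $S_{2}$), whereas in the quotient case one uses closure under quotients for both $\mathcal{S}_{1}$ and $\mathcal{S}_{2}$. Verifying the two isomorphisms via the standard isomorphism theorems is routine and completes the proof.
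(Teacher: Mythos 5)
Your proof is correct and is essentially the paper's own argument: the paper packages the same construction into a single $3\times 3$ commutative diagram whose first column is your sequence $0 \to N \cap S_{1} \to N \to N/(N\cap S_{1}) \to 0$ for the submodule and whose third column is your sequence for the quotient, with the identical use of the Serre closure properties of $\mathcal{S}_{1}$ and $\mathcal{S}_{2}$. Splitting the diagram into two explicit isomorphism-theorem computations, as you do, changes only the presentation, not the substance.
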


\begin{proof}
Let $0 \to L \to M \to N \to 0$ be a short exact sequence of $R$-modules. 
We assume that $M$ is in $(\mathcal{S}_{1}, \mathcal{S}_{2})$ and shall show that $L$ and $N$ are in $(\mathcal{S}_{1}, \mathcal{S}_{2})$. 

It follows from the definition of $(\mathcal{S}_{1}, \mathcal{S}_{2})$ that 
there exists a short exact sequence  
\[ \minCDarrowwidth1pc \begin{CD}0 @>>> S_{1} @>{\varphi}>> M @>>> S_{2} @>>> 0\end{CD} \]
of $R$-modules where $S_{1}$ is in $\mathcal{S}_{1}$ and $S_{2}$ is in $\mathcal{S}_{2}$.  
Then we can construct the following commutative diagram 

\[ \begin{CD}
@. 0 @. 0 @. 0 @. \\
@. @VVV @VVV @VVV \\
0 @>>> S_{1} \cap L @>>> S_{1} @>>> \displaystyle\frac{S_{1}}{S_{1} \cap L} @>>> 0 \\
@. @VVV @VV{\varphi}V @VV{\overline{\varphi}}V \\
0 @>>> L  @>>> M @>>> N @>>> 0 \\
@. @VVV @VVV @VVV \\
0 @>>> \displaystyle\frac{L}{S_{1} \cap L}   @>>> S_{2} @>>> N^{\prime} @>>> 0 \\
@. @VVV @VVV @VVV \\
@. 0 @. 0 @. 0 @.\vspace{3pt}
\end{CD} \]

\vspace{3pt}
\noindent
of $R$-modules with exact rows and columns 
where $\overline{\varphi}$ is a natural map induced by $\varphi$ and $N^{\prime}=\mathrm{Coker} (\overline{\varphi})$. 
Since each subcategory $\mathcal{S}_{i}$ is closed under submodules and quotient modules, 
we can see that $S_{1} \cap L$, $S_{1}/(S_{1}\cap L)$ are in $\mathcal{S}_{1}$ and $L/(S_{1} \cap L)$, $N^{\prime}$ are in $\mathcal{S}_{2}$.  
Consequently, $L$ and $N$ are in $(\mathcal{S}_{1}, \mathcal{S}_{2})$.
\end{proof}

\vspace{5pt}
Here, a natural question arises. 

\begin{question}  
Is a subcategory $(\mathcal{S}_{1}, \mathcal{S}_{2})$ Serre subcategory for Serre subcategories $\mathcal{S}_{1}$ and $\mathcal{S}_{2}$?
\end{question}

\vspace{3pt}
\noindent
For example, K.\ Bahmanpour and R.\ Naghipour showed that the subcategory $(\mathcal{S}_{f. g. }, \mathcal{S}_{Artin })$ is a Serre subcategory in \cite[Lemma 2.1]{BN-2008}. 
Proposition \ref{cusq} states that a subcategory $(\mathcal{S}_{1}, \mathcal{S}_{2})$ is a Serre subcategory if this subcategory is closed under extensions.  
However, the conclusion in the above question does not hold in general.

\begin{example}\label{Example-nonSerre}
We shall see the subcategory $(\mathcal{S}_{Artin }, \mathcal{S}_{f. g. })$ needs not be closed under extensions. 

Let $(R, \mathfrak{m})$ be a Gorenstein local ring of dimension one with maximal ideal $\mathfrak{m}$.
Then one has a minimal injective resolution 
\[ 0 \to R \to \bigoplus_{ \text{$\begin{matrix}\mathfrak{p} \in \mathrm{Spec}(R),\cr \mathrm{ht} \mathfrak{p}=0 \end{matrix}$}} E_{R}(R/\mathfrak{p} ) \to E_{R}(R/\mathfrak{m}) \to 0 \] of $R$. 
($E_{R}(M)$ denotes the injective hull of an $R$-module $M$.)
We note that $R$ and $E_{R}(R/\mathfrak{m})$ are in $(\mathcal{S}_{Artin }, \mathcal{S}_{f. g. })$. 

Now, we assume that $(\mathcal{S}_{Artin }, \mathcal{S}_{f. g. })$ is closed under extensions. 
Then $E_{R}(R)=\oplus_{\mathrm{ht} \mathfrak{p}=0} E_{R}(R/\mathfrak{p})$ is in $(\mathcal{S}_{Artin }, \mathcal{S}_{f. g. })$. 
It follows from the definition of $(\mathcal{S}_{Artin }, \mathcal{S}_{f. g. })$ that there exists an Artinian $R$-submodule $N$ of $E_{R}(R)$ such that $E_{R}(R)/N$ is a finitely generated $R$-module. 

If $N=0$, then $E_{R}(R)$ is a finitely generated injective $R$-module. 
It follows from the Bass formula that one has $\mathrm{dim}\, R=\mathrm{depth}\, R= \mathrm{inj\,dim}\, E_{R}(R)=0$. 
However, this equality contradicts $\mathrm{\dim}\, R=1$. 
On the other hand, if $N\neq 0$, then $N$ is a non-zero Artinian $R$-module. 
Hence, one has $\mathrm{Ass}_{R}(N)=\{ \mathfrak{m} \}$. 
Since $N$ is an $R$-submodule of $E_{R}(R)$, one has 
\[ \mathrm{Ass}_{R}(N) \subseteq \mathrm{Ass}_{R}(E_{R}(R))=\{ \mathfrak{p} \in \mathrm{Spec}(R) \mid \mathrm{ht}\, \mathfrak{p}=0\}. \]
This is contradiction as well.
\end{example}

\vspace{7pt}
%
%
%
\section{Subcategories of extension modules by Serre subcategories of $R\text{-}\mathrm{mod}$}

In this section, 
we shall see that a subcategory $(\mathcal{S}_{1}, \mathcal{S}_{2})$ of $R\text{-}\mathrm{mod}$ is Serre for Serre subcategories $\mathcal{S}_{1}$ and $\mathcal{S}_{2}$ of $R\text{-}\mathrm{mod}$. 

\vspace{5pt} 
Recall that a subset $W$ of $\mathrm{Spec}(R)$ is said to be a specialization closed subset, 
provided that if $\mathfrak{p}$ is a prime ideal in $W$ and $\mathfrak{q}$ is a prime ideal containing $\mathfrak{p}$ then $\mathfrak{q}$ is also in $W$. 
We denote by $\varGamma_{W}$ the section functor with support in a specialization closed subset $W$ of $\mathrm{Spec}(R)$, that is 
\[ \varGamma_{W}(M)=\{x \in M \mid \mathrm{Supp}_{R}(Rx) \subseteq W \} \]
for each $R$-module $M$.  

\vspace{5pt}
Let us start to prove the following  two lemmas.

%
%
\begin{lemma}\label{DL}
Let $M$ be an $R$-module and $W$ be a specialization closed subset of $\mathrm{Spec}(R)$. 
Then $\mathrm{Ass}_{R}(\varGamma_{W}(M))$ and $\mathrm{Ass}_{R}(M/\varGamma_{W}(M))$ are disjoint, and that
\[ \mathrm{Ass}_{R}(M)=\mathrm{Ass}_{R}(\varGamma_{W}(M))  \cup \mathrm{Ass}_{R}(M/\varGamma_{W}(M)).\] 
\end{lemma}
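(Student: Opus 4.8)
The plan is to read off everything from the short exact sequence
$0 \to \varGamma_{W}(M) \to M \to M/\varGamma_{W}(M) \to 0$ together with two standard properties of associated primes: for any short exact sequence $0 \to L \to M \to N \to 0$ one has $\mathrm{Ass}_{R}(L) \subseteq \mathrm{Ass}_{R}(M)$ and $\mathrm{Ass}_{R}(M) \subseteq \mathrm{Ass}_{R}(L) \cup \mathrm{Ass}_{R}(N)$. Applied here, these already deliver the inclusion $\mathrm{Ass}_{R}(M) \subseteq \mathrm{Ass}_{R}(\varGamma_{W}(M)) \cup \mathrm{Ass}_{R}(M/\varGamma_{W}(M))$ as well as one half, $\mathrm{Ass}_{R}(\varGamma_{W}(M)) \subseteq \mathrm{Ass}_{R}(M)$, of the reverse inclusion. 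So the real content is (i) the disjointness and (ii) the inclusion $\mathrm{Ass}_{R}(M/\varGamma_{W}(M)) \subseteq \mathrm{Ass}_{R}(M)$.

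First I would record the elementary fact $\mathrm{Supp}_{R}(\varGamma_{W}(M)) \subseteq W$, which is immediate from the definition $\varGamma_{W}(M) = \{x \mid \mathrm{Supp}_{R}(Rx) \subseteq W\}$ since the support of a module is the union of the supports of its cyclic submodules. As an associated prime always lies in the support, this gives $\mathrm{Ass}_{R}(\varGamma_{W}(M)) \subseteq W$. To control the quotient I would establish the idempotency $\varGamma_{W}(M/\varGamma_{W}(M)) = 0$: given $\bar{x}$ in the left-hand side, lift it to $x \in M$ and apply the support formula to $0 \to Rx \cap \varGamma_{W}(M) \to Rx \to R\bar{x} \to 0$; both outer terms have support contained in $W$ (the first by the remark above, the last by hypothesis), whence $\mathrm{Supp}_{R}(Rx) \subseteq W$, so $x \in \varGamma_{W}(M)$ and $\bar{x} = 0$. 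Feeding this into the previous remark applied to $M/\varGamma_{W}(M)$ yields $\mathrm{Ass}_{R}(M/\varGamma_{W}(M)) \cap W = \emptyset$; combined with $\mathrm{Ass}_{R}(\varGamma_{W}(M)) \subseteq W$ this proves the two sets of associated primes are disjoint, the first assertion.

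For the remaining inclusion I would localize. Let $\mathfrak{p} \in \mathrm{Ass}_{R}(M/\varGamma_{W}(M))$; by the disjointness just proved $\mathfrak{p} \notin W$, hence $\mathfrak{p} \notin \mathrm{Supp}_{R}(\varGamma_{W}(M))$ and $\varGamma_{W}(M)_{\mathfrak{p}} = 0$. Localizing the defining sequence at $\mathfrak{p}$ then gives an isomorphism $M_{\mathfrak{p}} \cong (M/\varGamma_{W}(M))_{\mathfrak{p}}$, and since $\mathfrak{p} \in \mathrm{Ass}_{R}(N)$ is equivalent to $\mathfrak{p}R_{\mathfrak{p}} \in \mathrm{Ass}_{R_{\mathfrak{p}}}(N_{\mathfrak{p}})$, we conclude $\mathfrak{p} \in \mathrm{Ass}_{R}(M)$. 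Together with $\mathrm{Ass}_{R}(\varGamma_{W}(M)) \subseteq \mathrm{Ass}_{R}(M)$ this gives the reverse inclusion and hence the claimed equality.

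I expect the only genuinely delicate point to be the idempotency $\varGamma_{W}(M/\varGamma_{W}(M)) = 0$, that is, verifying that passing to the quotient really removes every element supported in $W$; everything else is bookkeeping with the standard exact-sequence and localization behaviour of $\mathrm{Ass}_{R}$ and $\mathrm{Supp}_{R}$. As an alternative to the localization step one could argue directly that each $\mathfrak{p} \in \mathrm{Ass}_{R}(M/\varGamma_{W}(M))$ is already associated to $M$ by lifting an embedded copy of $R/\mathfrak{p}$ and using $\mathfrak{p} \notin W$, but the localization argument seems cleaner.
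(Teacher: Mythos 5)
Your proof is correct, and it takes a genuinely different route from the paper's on the one point that carries real content. Both arguments share the same skeleton: disjointness plus the inclusion $\mathrm{Ass}_{R}(M/\varGamma_{W}(M)) \subseteq \mathrm{Ass}_{R}(M)$ suffice, the rest being standard behaviour of $\mathrm{Ass}$ on short exact sequences. For that key inclusion the paper passes to the injective hull: it invokes the fact (cited from \cite[Theorem 2.6]{YY}) that $\varGamma_{W}(E_{R}(M))$ is again injective, so that $E_{R}(M) \cong \varGamma_{W}(E_{R}(M)) \oplus E_{R}(M)/\varGamma_{W}(E_{R}(M))$, checks that the induced map $M/\varGamma_{W}(M) \to E_{R}(M)/\varGamma_{W}(E_{R}(M))$ is a monomorphism, and concludes $\mathrm{Ass}_{R}(M/\varGamma_{W}(M)) \subseteq \mathrm{Ass}_{R}(E_{R}(M)) = \mathrm{Ass}_{R}(M)$. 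You instead localize: every $\mathfrak{p} \in \mathrm{Ass}_{R}(M/\varGamma_{W}(M))$ lies outside $W \supseteq \mathrm{Supp}_{R}(\varGamma_{W}(M))$, so $\varGamma_{W}(M)_{\mathfrak{p}} = 0$, hence $M_{\mathfrak{p}} \cong (M/\varGamma_{W}(M))_{\mathfrak{p}}$, and the compatibility of $\mathrm{Ass}$ with localization over a noetherian ring finishes. Your route is more elementary and self-contained: it needs no injective hulls and none of the structure theory behind the statement that $\varGamma_{W}$ preserves injectivity; it also makes explicit, via the idempotency $\varGamma_{W}(M/\varGamma_{W}(M)) = 0$, the disjointness claim that the paper simply calls clear. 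What the paper's argument buys is brevity given machinery the author already has on hand (the same splitting-off-an-injective device reappears in the proof of Corollary \ref{CISerre}), together with an explicit embedding of $M/\varGamma_{W}(M)$ into an injective module with controlled associated primes. Both proofs ultimately use noetherianness: yours through the localization behaviour of $\mathrm{Ass}$, the paper's through the injectivity of $\varGamma_{W}(E_{R}(M))$.
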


\begin{proof}
It is clear that one has $\mathrm{Ass}_{R}(\varGamma_{W}(M)) \subseteq W$ and $\mathrm{Ass}_{R}(M/\varGamma_{W}(M)) \cap W = \emptyset$. 
Hence, $\mathrm{Ass}_{R}(\varGamma_{W}(M)) \cup \mathrm{Ass}_{R}(M/\varGamma_{W}(M))$ is disjoint union. 

To see that the equality in our assertion holds, it is enough to show that one has $\mathrm{Ass}_{R}(M/\varGamma_{W}(M)) \subseteq \mathrm{Ass}_{R}(M)$. 
Let $E_{R}(M)$ be an injective hull of $M$. 
Then $\varGamma_{W}(E_{R}(M))$ is also an injective $R$-module. (Also see \cite[Theorem 2.6]{YY}.)
We consider the following commutative diagram

\[ \minCDarrowwidth1.7pc
\begin{CD}
0 @>>> \varGamma_{W}(M)@>>> M @>>> M/\varGamma_{W}(M) @>>> 0 \\
@. @VV{\varGamma_{W}(\varphi)=\varphi |_{\varGamma_{W}(M)}}V @VV{\varphi}V @VV{\overline{\varphi}}V \\
0 @>>> \varGamma_{W}(E_{R}(M))  @>>> E_{R}(M) @>>> E_{R}(M)/\varGamma_{W}(E_{R}(M)) @>>> 0 \\
\end{CD} \]

\vspace{3pt}
\noindent
of $R$-modules with exact rows 
where $\varphi$ is an inclusion map from $M$ to $E_{R}(M)$ and $\overline{\varphi}$ is a homomorphism induced by $\varphi$.  
By the injectivity of $\varGamma_{W}(E_{R}(M))$, we can see that the second exact sequence in the above diagram splits, 
so that one has
\[ E_{R}(M)=\varGamma_{W}(E_{R}(M))\oplus E_{R}(M)/\varGamma_{W}(E_{R}(M)).\] 

Here, we note that $\overline{\varphi}$ is a monomorphism. 
Actually, we assume $m \in M$ such that $\overline{\varphi}( m+\varGamma_{W}(M))=0 \in E_{R}(M)/\varGamma_{W}(E_{R}(M))$.   
Then it holds $m \in M \cap \varGamma_{W}(E_{R}(M))=\varGamma_{W}(M)$. 
Thus we have $m+\varGamma_{W}(M)=0 \in M/\varGamma_{W}(M)$. 
Consequently, we can see that one has   
\begin{align*}
\mathrm{Ass}_{R} (M/\varGamma_{W}(M)) 
&\subseteq \mathrm{Ass}_{R} \bigl( E_{R}(M)/\varGamma_{W}(E_{R}(M)) \bigr)\\ 
&\subseteq \mathrm{Ass}_{R} (E_{R}(M))\\
&= \mathrm{Ass}_{R} (M).
\end{align*}

The proof is completed.
\end{proof}

%
%
If Serre subcategories $\mathcal{S}_{1}$ and $\mathcal{S}_{2}$ are related to specialization closed subsets of $\mathrm{Spec}(R)$, 
then the structure of a subcategory $(\mathcal{S}_{1}, \mathcal{S}_{2})$ is simple. 
 
\begin{lemma}\label{lem-SerreofRmod}
Let $W_{1}$ and $W_{2}$  be specialization closed subsets of $\mathrm{Spec}(R)$. 
Then the following assertions hold.
\begin{enumerate}
\item \,  
We set $\mathcal{S}_{i} =\{ M \in R\text{-}\mathrm{Mod} \mid \mathrm{Supp}_{R}(M) \subseteq W_{i} \}$  for $i=1$, $2$.
Then one has  
\[ (\mathcal{S}_{1}, \mathcal{S}_{2}) =\{ M \in R\text{-}\mathrm{Mod} \mid \mathrm{Supp}_{R}(M) \subseteq W_{1} \cup W_{2} \}. \] 

\vspace{3pt}
\item \, 
We set $\mathcal{S}_{i} =\{ M \in R\text{-}\mathrm{mod} \mid \mathrm{Supp}_{R}(M) \subseteq W_{i} \}$  for $i=1$, $2$.
Then one has 
\[ (\mathcal{S}_{1}, \mathcal{S}_{2}) =\{ M \in R\text{-}\mathrm{mod} \mid \mathrm{Supp}_{R}(M) \subseteq W_{1} \cup W_{2} \}. \]
\end{enumerate}
\noindent
In particular, subcategories $(\mathcal{S}_{1}, \mathcal{S}_{2})$ in $(1)$ and $(2)$ are Serre subcategories. 
\end{lemma}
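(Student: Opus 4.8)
The plan is to show that each $(\mathcal{S}_1,\mathcal{S}_2)$ coincides with the subcategory of modules supported in $W_1\cup W_2$, and then to observe that the latter is Serre because $W_1\cup W_2$ is again specialization closed. The workhorse will be the elementary dictionary that, for a specialization closed subset $W$ and any $R$-module $M$, one has $\mathrm{Supp}_R(M)\subseteq W$ if and only if $\mathrm{Ass}_R(M)\subseteq W$; the nontrivial implication follows from the fact that every prime in $\mathrm{Supp}_R(M)$ contains some prime in $\mathrm{Ass}_R(M)$ (localize and use that a nonzero module over a noetherian ring has an associated prime), together with the specialization closedness of $W$.

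For the inclusion $\subseteq$, I would take $M\in(\mathcal{S}_1,\mathcal{S}_2)$ with defining short exact sequence $0\to S_1\to M\to S_2\to 0$ and use additivity of support, namely $\mathrm{Supp}_R(M)=\mathrm{Supp}_R(S_1)\cup\mathrm{Supp}_R(S_2)\subseteq W_1\cup W_2$. This handles both (1) and (2) at once and requires nothing beyond the hypotheses $\mathrm{Supp}_R(S_i)\subseteq W_i$.

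The substantive direction is $\supseteq$. Given $M$ with $\mathrm{Supp}_R(M)\subseteq W_1\cup W_2$, I would set $S_1=\varGamma_{W_1}(M)$ and $S_2=M/\varGamma_{W_1}(M)$, producing the short exact sequence required by the definition of $(\mathcal{S}_1,\mathcal{S}_2)$. That $\mathrm{Supp}_R(S_1)\subseteq W_1$ is immediate from the definition of $\varGamma_{W_1}$, since $S_1$ is the union of its cyclic submodules $Rx$, each supported in $W_1$. For $S_2$ I would invoke Lemma \ref{DL}, which gives $\mathrm{Ass}_R(S_2)\subseteq\mathrm{Ass}_R(M)$ and $\mathrm{Ass}_R(S_2)\cap W_1=\emptyset$. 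Combining with $\mathrm{Ass}_R(M)\subseteq\mathrm{Supp}_R(M)\subseteq W_1\cup W_2$ yields $\mathrm{Ass}_R(S_2)\subseteq (W_1\cup W_2)\setminus W_1\subseteq W_2$, and the dictionary above converts this into $\mathrm{Supp}_R(S_2)\subseteq W_2$. I expect this passage from associated primes back to support---where specialization closedness of $W_2$ is essential---to be the main obstacle, because it is genuinely \emph{false} that $\mathrm{Supp}_R(M/\varGamma_{W_1}(M))$ avoids $W_1$; one really must route through $\mathrm{Ass}$.

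Finally, in case (2) I would note that $S_1$ is a submodule and $S_2$ a quotient of the finitely generated module $M$ over a noetherian ring, hence both are finitely generated, so the same sequence witnesses $M\in(\mathcal{S}_1,\mathcal{S}_2)$ inside $R\text{-}\mathrm{mod}$. For the closing assertion I would remark that $W_1\cup W_2$ is specialization closed and that the subcategory of modules supported in a specialization closed set is closed under submodules, quotients and extensions by additivity of support, so $(\mathcal{S}_1,\mathcal{S}_2)$ is Serre in both settings.
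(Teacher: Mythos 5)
Your proposal is correct and follows essentially the same route as the paper's proof: the easy inclusion by additivity of support, and the reverse inclusion via the short exact sequence $0 \to \varGamma_{W_{1}}(M) \to M \to M/\varGamma_{W_{1}}(M) \to 0$ combined with Lemma \ref{DL} to place the associated primes of the quotient inside $W_{2}$. The only difference is that you spell out the conversion from $\mathrm{Ass}_{R} \subseteq W_{2}$ back to $\mathrm{Supp}_{R} \subseteq W_{2}$ (and the finite-generation bookkeeping in case (2)), steps the paper leaves implicit.
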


\begin{proof} 
\noindent
(1)\, 
If $M$ is in $(\mathcal{S}_{1}, \mathcal{S}_{2})$, 
then there exists a short exact sequence \[0 \to S_{1} \to M \to S_{2} \to 0\] of $R$-modules such that $S_{1}$ is in  $\mathcal{S}_{1}$ and $S_{2}$ is in $\mathcal{S}_{2}$. 
Then $\mathrm{Supp}_{R}(M)=\mathrm{Supp}_{R}(S_{1}) \cup \mathrm{Supp}_{R}(S_{2}) \subseteq W_{1} \cup W_{2}$. 

Conversely, let $M$ be an $R$-module with $\mathrm{Supp}_{R}(M) \subseteq W_{1} \cup W_{2}$. 
We consider a short exact sequence \[ 0 \to \varGamma_{W_{1}}(M) \to M \to M/\varGamma_{W_{1}}(M) \to 0.\]
We note that one has $\mathrm{Supp}_{R}(\varGamma_{W_{1}}(M)) \subseteq W_{1}$. 
Therefore, to prove our assertion, 
it is enough to show that one has $\mathrm{Ass}_{R}(M/\varGamma_{W_{1}}(M)) \subseteq W_{2}$. 
It follows from Lemma \ref{DL} that we have  
\[ \mathrm{Ass}_{R}(M/\varGamma_{W_{1}}(M))\subseteq \mathrm{Ass}_{R}(M) \subseteq \mathrm{Supp}_{R}(M) \subseteq W_{1} \cup W_{2}. \] 
Since $\mathrm{Ass}_{R} (M/\varGamma_{W_{1}}(M)) \cap W_{1} =\emptyset$, one has $\mathrm{Ass}_{R}(M/\varGamma_{W_{1}}(M))\subseteq W_{2}$.  
Consequently, $M$ is in $(\mathcal{S}_{1}, \mathcal{S}_{2})$.   

\noindent
(2)\, We can show the assertion by the same argument in (1).
\end{proof}

\vspace{5pt}
Now, we can show the purpose of this section.

\begin{theorem}\label{serre-Rmod}
Let $\mathcal{S}_{1}$ and $\mathcal{S}_{2}$ be Serre subcategories of $R\text{-}\mathrm{mod}$. 
Then a subcategory $(\mathcal{S}_{1}, \mathcal{S}_{2})$ is a Serre subcategory of $R\text{-}\mathrm{mod}$. 
\end{theorem}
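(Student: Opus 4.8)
The plan is to reduce everything to Gabriel's classification theorem, which is exactly the tool the introduction advertises. Since $\mathcal{S}_{1}$ and $\mathcal{S}_{2}$ are Serre subcategories of $R\text{-}\mathrm{mod}$, Gabriel's result (cite \cite{G}) gives specialization closed subsets $W_{1}$ and $W_{2}$ of $\mathrm{Spec}(R)$ with
\[ \mathcal{S}_{i}=\{ M \in R\text{-}\mathrm{mod} \mid \mathrm{Supp}_{R}(M) \subseteq W_{i} \} \qquad (i=1,2). \]
First I would record that $W_{1} \cup W_{2}$ is again specialization closed: if $\mathfrak{p} \in W_{1} \cup W_{2}$ lies in, say, $W_{1}$, then any $\mathfrak{q} \supseteq \mathfrak{p}$ lies in $W_{1} \subseteq W_{1} \cup W_{2}$ by specialization-closedness of $W_{1}$, and symmetrically for $W_{2}$.

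With this presentation in hand, the heart of the argument is simply to invoke Lemma \ref{lem-SerreofRmod}(2), which identifies the extension subcategory with a support condition:
\[ (\mathcal{S}_{1}, \mathcal{S}_{2}) =\{ M \in R\text{-}\mathrm{mod} \mid \mathrm{Supp}_{R}(M) \subseteq W_{1} \cup W_{2} \}. \]
Because $W_{1} \cup W_{2}$ is specialization closed, the right-hand side is exactly the Serre subcategory of $R\text{-}\mathrm{mod}$ associated to $W_{1} \cup W_{2}$ under Gabriel's correspondence. Hence $(\mathcal{S}_{1}, \mathcal{S}_{2})$ is a Serre subcategory, and one is done.

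If one prefers to avoid leaning on the full strength of the classification and instead verify the three Serre closure conditions directly, then Proposition \ref{cusq} already supplies closure under submodules and quotient modules, so the only remaining obligation is closure under extensions. In that case I would still pass through the support description above, since once $(\mathcal{S}_{1},\mathcal{S}_{2})$ is characterized by $\mathrm{Supp}_{R}(M)\subseteq W_{1}\cup W_{2}$, closure under extensions is immediate from the additivity of support on a short exact sequence $0 \to L \to M \to N \to 0$, namely $\mathrm{Supp}_{R}(M)=\mathrm{Supp}_{R}(L) \cup \mathrm{Supp}_{R}(N)$. The one place demanding genuine input is therefore the appeal to Gabriel's theorem to obtain the sets $W_{i}$ in the first place; everything after that is formal, and the main conceptual obstacle — translating the extension condition into a support condition — has already been discharged in Lemma \ref{lem-SerreofRmod}.
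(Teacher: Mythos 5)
Your proposal is correct and follows essentially the same route as the paper: invoke Gabriel's classification to present $\mathcal{S}_{1}$ and $\mathcal{S}_{2}$ via specialization closed subsets $W_{1}$, $W_{2}$, then apply Lemma \ref{lem-SerreofRmod}(2) to identify $(\mathcal{S}_{1},\mathcal{S}_{2})$ with the modules supported in $W_{1}\cup W_{2}$, which is a Serre subcategory. The extra details you supply (that $W_{1}\cup W_{2}$ is specialization closed, and the verification of the three closure conditions via additivity of support) are already absorbed into the ``in particular'' clause of Lemma \ref{lem-SerreofRmod}, so nothing essential differs.
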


\begin{proof}
By Gabriel's result in \cite{G}, 
there is a bijection between the set of Serre subcategories of $R\text{-}\mathrm{mod}$ and the set of specialization closed subsets of $\mathrm{Spec}(R)$. (Also see \cite[Theorem 4.1]{Takahashi-2008}.)
Thus, there exists a specialization closed subset $W_{1}$ (resp. $W_{2}$) of $\mathrm{Spec}(R)$ corresponding to the Serre subcategory $\mathcal{S}_{1}$ (resp. $\mathcal{S}_{2}$). 
In particular, we can denote   
\[\mathcal{S}_{i} =\{  M\in R\text{-}\mathrm{mod} \mid \mathrm{Supp}_{R}(M) \subseteq W_{i} \} \hspace{5pt} \text{and} \hspace{5pt} W_{i} =\bigcup_{M \in \mathcal{S}_{i}} \mathrm{Supp}_{R} (M) \]
for each $i$. 
It follows from Lemma \ref{lem-SerreofRmod} that one has   
\[ (\mathcal{S}_{1}, \mathcal{S}_{2})=\{ M \in R\text{-}\mathrm{mod} \mid \mathrm{Supp}_{R}(M) \subseteq W_{1} \cup W_{2} \} \]
and this subcategory is a Serre subcategory of $R\text{-}\mathrm{mod}$.
\end{proof}

\vspace{7pt}
%
%
%
\section{A criterion for a subcategory $(\mathcal{S}_{1}, \mathcal{S}_{2})$ to be Serre}

Let $\mathcal{S}_{1}$ and $\mathcal{S}_{2}$ be Serre subcategories of $R\text{-}\mathrm{Mod}$.
In this section, 
we shall give a necessary and sufficient condition for a subcategory $(\mathcal{S}_{1}, \mathcal{S}_{2})$ to be a Serre subcategory  
and several examples of Serre subcategory $(\mathcal{S}_{1}, \mathcal{S}_{2})$. 
In particular, we shall show that subcategories $(\mathcal{S}_{f. g. }, \mathcal{S})$ and $(\mathcal{S}, \mathcal{S}_{Artin})$ are Serre subcategories for a Serre subcategory $\mathcal{S}$ of $R\text{-}\mathrm{Mod}$.

\vspace{5pt} 
We start to prove the following lemma. 
If a subcategory $(\mathcal{S}_{1}, \mathcal{S}_{2})$ is a Serre subcategory, 
then we have already seen that one has $(\mathcal{S}_{1},(\mathcal{S}_{1}, \mathcal{S}_{2}))=((\mathcal{S}_{1}, \mathcal{S}_{2}), \mathcal{S}_{2})=(\mathcal{S}_{1},\mathcal{S}_{2})$  in Remark \ref{basic}.  
However, we can see that the following assertion always holds without such an assumption.

\begin{lemma}\label{comp}
Let $\mathcal{S}_{1}$ and $\mathcal{S}_{2}$ be Serre subcategories of $R\text{-}\mathrm{Mod}$. 
We suppose that a sequence $ 0 \to L \to M \to N \to 0$ of $R$-modules is exact. 
Then the following assertions hold. 
\begin{enumerate}
\item\, If $L\in \mathcal{S}_{1}$ and $N \in (\mathcal{S}_{1}, \mathcal{S}_{2})$, then $M \in (\mathcal{S}_{1}, \mathcal{S}_{2})$.
\vspace{3pt}
\item\, If $L \in (\mathcal{S}_{1}, \mathcal{S}_{2})$ and $N \in \mathcal{S}_{2}$, then $M \in (\mathcal{S}_{1}, \mathcal{S}_{2})$.
\end{enumerate}
\end{lemma}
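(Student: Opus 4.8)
The plan is to prove the two statements separately, in each case starting from the given two-step filtration witnessing membership in $(\mathcal{S}_1,\mathcal{S}_2)$ and transporting it through the ambient short exact sequence, using crucially that each $\mathcal{S}_i$ is closed under extensions. The whole argument is a pair of short diagram chases, so I will focus on choosing the right submodule of $M$ in each part.

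For (1), since $N\in(\mathcal{S}_1,\mathcal{S}_2)$, I would fix a short exact sequence $0\to S_1\to N\to S_2\to 0$ with $S_i\in\mathcal{S}_i$. Writing $\pi\colon M\to N$ for the surjection in the given sequence, the natural candidate is the preimage $M'=\pi^{-1}(S_1)$. Restricting the given sequence then yields an exact sequence $0\to L\to M'\to S_1\to 0$, and because $L$ and $S_1$ both lie in $\mathcal{S}_1$, which is closed under extensions, we get $M'\in\mathcal{S}_1$. On the other hand $M/M'\cong N/S_1\cong S_2\in\mathcal{S}_2$, so the short exact sequence
\[ 0 \to M' \to M \to S_2 \to 0 \]
exhibits $M$ as a member of $(\mathcal{S}_1,\mathcal{S}_2)$.

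For (2), I would instead exploit a quotient. Since $L\in(\mathcal{S}_1,\mathcal{S}_2)$, fix $0\to S_1\to L\to S_2\to 0$ with $S_i\in\mathcal{S}_i$, and regard $S_1$ as a submodule of $M$ via $S_1\subseteq L\subseteq M$. Passing to the quotient by $S_1$ gives an exact sequence $0\to L/S_1\to M/S_1\to M/L\to 0$, that is $0\to S_2\to M/S_1\to N\to 0$. As $S_2$ and $N$ both lie in $\mathcal{S}_2$ and $\mathcal{S}_2$ is closed under extensions, $M/S_1\in\mathcal{S}_2$, and then
\[ 0 \to S_1 \to M \to M/S_1 \to 0 \]
shows $M\in(\mathcal{S}_1,\mathcal{S}_2)$.

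I do not anticipate a genuine obstacle here: the only points that require a little care are the identification $M/M'\cong N/S_1\cong S_2$ in part~(1) (which follows from the definition of $M'$ as a preimage) and the corresponding quotient identification in part~(2). Once the correct intermediate submodule is selected, the closure of $\mathcal{S}_1$, respectively $\mathcal{S}_2$, under extensions supplies the conclusion immediately. The conceptual content is simply that a given filtration of $L$ or of $N$ can be glued to the ambient one-step extension, producing a length-two filtration of $M$ of the required shape.
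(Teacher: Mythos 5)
Your proof is correct and is essentially the paper's own argument in concrete form: your preimage $M'=\pi^{-1}(S_1)$ in part (1) is exactly the pull-back module $S'$ the paper constructs, and your quotient $M/S_1$ in part (2) is exactly its push-out module $T'$. In both cases the key step (closure of $\mathcal{S}_1$, resp.\ $\mathcal{S}_2$, under extensions applied to the induced sequence, followed by reading off the middle column) coincides with the paper's diagram chase.
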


\begin{proof}
\noindent
(1)\, 
We assume that $L$ is in $\mathcal{S}_{1}$ and $N$ is in $(\mathcal{S}_{1}, \mathcal{S}_{2})$. 
Since $N$ belongs to $(\mathcal{S}_{1}, \mathcal{S}_{2})$, 
there exists a short exact sequence 
\[ 0 \to S \to N \to T \to 0\]
of $R$-modules where $S$ is in $\mathcal{S}_{1}$ and $T$ is in $\mathcal{S}_{2}$. 
Then we consider the following pull buck diagram

$$  
\begin{CD}
@.  @.0  @.0  @.  \\
@. @. @VVV @VVV \\
0 @>>> L @>>>S^{\prime}  @>>> S @>>> 0\\
@. \parallel  @.  @VVV @VVV \\
0 @>>> L  @>>> M @>>> N @>>> 0 \\
@. @. @VVV  @VVV  \\
@. @. T @=  T @. \\
@. @. @VVV @VVV \\
@. @. 0 @. 0 @. 
\end{CD} $$

\vspace{3pt}
\noindent
of $R$-modules with exact rows and columns. 
Since $\mathcal{S}_{1}$ is a Serre subcategory, 
it follows from the first row in the diagram that $S^{\prime}$ belongs to $\mathcal{S}_{1}$. 
Consequently, we see that $M$ is in $(\mathcal{S}_{1}, \mathcal{S}_{2})$ by the middle column in the diagram.

\noindent
(2) We assume that $L$ is in $(\mathcal{S}_{1}, \mathcal{S}_{2})$ and $N$ is in $\mathcal{S}_{2}$. 
Since $L$ belongs to $(\mathcal{S}_{1}, \mathcal{S}_{2})$, 
there exists an exact sequence 
\[0 \to S \to L \to T \to 0 \]
of $R$-modules where $S$ is in $\mathcal{S}_{1}$ and $T$ is in $\mathcal{S}_{2}$. 
Then we consider the following push out diagram

$$  
\begin{CD}
@. 0 @. 0 @.  @. \\
@. @VVV @VVV @. \\
@. S @= S @. @. \\
@. @VVV @VVV @. \\
0 @>>> L  @>>> M @>>> N @>>> 0 \\
@. @VVV @VVV @. \hspace{-3.6cm}{\parallel}  \\
0 @>>> T @>>> T^{\prime} @>>>  N @>>>0 \\
@. @VVV @VVV @. \\
@. 0 @. 0 @. @. 
\end{CD} $$

\vspace{3pt}
\noindent
of $R$-modules with exact rows and columns.
Since $\mathcal{S}_{2}$ is a Serre subcategory,  
it follows from the third row in the diagram that $T^{\prime}$ belongs to $\mathcal{S}_{2}$. 
Therefore, we see that $M$ is in $(\mathcal{S}_{1}, \mathcal{S}_{2})$ by the middle column in the diagram.
\end{proof}

%
%
%
\vspace{5pt}
Now, we can show the main purpose of this paper. 

\begin{theorem}\label{serre}
Let $\mathcal{S}_{1}$ and $\mathcal{S}_{2}$ be Serre subcategories of $R\text{-}\mathrm{Mod}$. 
Then the following conditions are equivalent: 

\begin{enumerate}
\item\, A subcategory $(\mathcal{S}_{1}, \mathcal{S}_{2})$ is a Serre subcategory; 

\vspace{3pt}
\item\, One has $(\mathcal{S}_{2}, \mathcal{S}_{1}) \subseteq (\mathcal{S}_{1}, \mathcal{S}_{2})$.  
\end{enumerate}
\end{theorem}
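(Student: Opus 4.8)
The plan is to reduce everything to the extension-closure property. By Proposition~\ref{cusq} the subcategory $(\mathcal{S}_{1},\mathcal{S}_{2})$ is automatically closed under submodules and quotient modules, so condition~(1) is equivalent to the single requirement that $(\mathcal{S}_{1},\mathcal{S}_{2})$ be closed under extensions. With this reformulation in hand, Lemma~\ref{comp} becomes the main engine for both implications.

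For the implication (1)$\Rightarrow$(2) I would argue directly. If $M$ lies in $(\mathcal{S}_{2},\mathcal{S}_{1})$, then there is a short exact sequence $0\to S_{2}\to M\to S_{1}\to 0$ with $S_{2}\in\mathcal{S}_{2}$ and $S_{1}\in\mathcal{S}_{1}$. By Remark~\ref{basic}(1) both $S_{1}$ and $S_{2}$ already belong to $(\mathcal{S}_{1},\mathcal{S}_{2})$, so closure under extensions (which is exactly what (1) provides) forces $M\in(\mathcal{S}_{1},\mathcal{S}_{2})$; hence $(\mathcal{S}_{2},\mathcal{S}_{1})\subseteq(\mathcal{S}_{1},\mathcal{S}_{2})$.

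The substance is in (2)$\Rightarrow$(1). Here I take an arbitrary short exact sequence $0\to L\to M\to N\to 0$ with $L,N\in(\mathcal{S}_{1},\mathcal{S}_{2})$ and must produce $M\in(\mathcal{S}_{1},\mathcal{S}_{2})$. I choose defining sequences $0\to S_{1}\to L\to S_{2}\to 0$ and $0\to T_{1}\to N\to T_{2}\to 0$ with $S_{1},T_{1}\in\mathcal{S}_{1}$ and $S_{2},T_{2}\in\mathcal{S}_{2}$, and let $M'$ be the preimage of $T_{1}\subseteq N$ under the surjection $M\to N$, so that $M'/L\cong T_{1}$ and $M/M'\cong T_{2}$. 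The key observation is that the subquotient $M'/S_{1}$ has $L/S_{1}\cong S_{2}\in\mathcal{S}_{2}$ as a submodule with quotient $M'/L\cong T_{1}\in\mathcal{S}_{1}$, so $M'/S_{1}\in(\mathcal{S}_{2},\mathcal{S}_{1})$; the hypothesis (2) then reorders this block and yields $M'/S_{1}\in(\mathcal{S}_{1},\mathcal{S}_{2})$. From $0\to S_{1}\to M'\to M'/S_{1}\to 0$ and Lemma~\ref{comp}(1) I obtain $M'\in(\mathcal{S}_{1},\mathcal{S}_{2})$, and then from $0\to M'\to M\to T_{2}\to 0$ and Lemma~\ref{comp}(2) I obtain $M\in(\mathcal{S}_{1},\mathcal{S}_{2})$, as required.

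I expect the main obstacle to be precisely the identification of the ``out-of-order'' subquotient $M'/S_{1}$. The filtration of $M$ refines to successive quotients of types $\mathcal{S}_{1},\mathcal{S}_{2},\mathcal{S}_{1},\mathcal{S}_{2}$, and the only place where the $\mathcal{S}_{2}$-then-$\mathcal{S}_{1}$ ordering occurs is the middle block $M'/S_{1}$. Recognizing that this middle block is exactly a $(\mathcal{S}_{2},\mathcal{S}_{1})$-module, so that hypothesis (2) may be invoked to swap the order, is the crucial step; once it is in place the remainder is a routine two-step climb using Lemma~\ref{comp}.
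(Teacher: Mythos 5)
Your proof is correct and is essentially the paper's own argument: the paper also reduces to extension-closure via Proposition~\ref{cusq}, isolates the same middle block (your $M'/S_{1}$ appears there as the module $P'$, constructed via a push-out along $L \to L/S$ followed by a pull-back along $T \to N$), applies hypothesis~(2) to swap it, and finishes with the same two applications of Lemma~\ref{comp}. The only differences are presentational — you use an explicit submodule filtration $S_{1} \subseteq L \subseteq M' \subseteq M$ rather than diagrams, and you apply Lemma~\ref{comp}(1) and (2) in the opposite order — neither of which changes the substance.
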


\begin{proof}
$(1) \Rightarrow (2)$ 
We assume that $M$ is in $(\mathcal{S}_{2}, \mathcal{S}_{1})$. 
By the definition of a subcategory $(\mathcal{S}_{2}, \mathcal{S}_{1})$, 
there exists a short exact sequence
\[ 0 \to S_{2} \to M \to S_{1} \to 0 \]
of $R$-modules where $S_{1}$ is in $\mathcal{S}_{1}$ and $S_{2}$ is in $\mathcal{S}_{2}$. 
We note that $S_{1}$ is in $\mathcal{S}_{1} \subseteq (\mathcal{S}_{1}, \mathcal{S}_{2})$ and  $S_{2}$ is in $\mathcal{S}_{2} \subseteq (\mathcal{S}_{1}, \mathcal{S}_{2})$. 
Since a subcategory $(\mathcal{S}_{1}, \mathcal{S}_{2})$ is closed under extensions by the assumption (1), 
we see that $M$ is in $(\mathcal{S}_{1}, \mathcal{S}_{2})$. 

\vspace{3pt}
\noindent
$(2) \Rightarrow (1)$ 
We  only have to prove that a subcategory $(\mathcal{S}_{1}, \mathcal{S}_{2})$ is closed under extensions by Proposition \ref{cusq}.
Let $0 \to L \to M \to N \to 0$ be a short exact sequence of $R$-modules such that $L$ and $N$ are in $(\mathcal{S}_{1}, \mathcal{S}_{2})$. 
We shall show that $M$ is also in $(\mathcal{S}_{1}, \mathcal{S}_{2})$. 

Since $L$ is in $(\mathcal{S}_{1}, \mathcal{S}_{2})$, there exists a short exact sequence
\[ 0 \to S \to L \to L/S \to 0 \] 
of $R$-modules where $S$ is in $\mathcal{S}_{1}$ such that $L/S$ is in $\mathcal{S}_{2}$. 
We consider the following push out diagram 

$$  
\begin{CD}
@. 0 @. 0 @.  @. \\
@. @VVV @VVV @. \\
@. S @= S @. @. \\
@. @VVV @VVV @. \\
0 @>>> L  @>>> M @>>> N @>>> 0 \\
@. @VVV @VVV @. \hspace{-3.6cm}{\parallel}  \\
0 @>>> L/S @>>> P @>>>  N @>>>0 \\
@. @VVV @VVV @. \\
@. 0 @. 0 @.  @. 
\end{CD} $$

\vspace{5pt}
\noindent
of $R$-modules with exact rows and columns. 
Next, since $N$ is in $(\mathcal{S}_{1}, \mathcal{S}_{2})$, we have a short exact sequence
\[ 0 \to T \to N \to N/T \to 0 \] 
of $R$-modules where $T$ is in $S_{1}$ such that $N/T$ is in $\mathcal{S}_{2}$. 
We consider the following pull back diagram 

$$  
\begin{CD}
@.  @.0  @.0  @.  \\
@. @. @VVV @VVV \\
0 @>>> L/S @>>>P^{\prime}  @>>> T @>>> 0\\
@. \parallel  @.  @VVV @VVV \\
0 @>>> L/S  @>>> P @>>> N @>>> 0 \\
@. @. @VVV  @VVV  \\
@. @. N/T @=  N/T @. \\
@. @. @VVV @VVV \\
@. @. 0 @. 0 @. 
\end{CD} $$

\vspace{5pt}
\noindent
of $R$-modules with exact rows and columns. 

In the first row of the second diagram, 
since $L/S$ is in $\mathcal{S}_{2}$ and $T$ is in $\mathcal{S}_{1}$, $P^{\prime}$ is in $(\mathcal{S}_{2}, \mathcal{S}_{1})$. 
Now here, it follows from the assumption (2) that $P^{\prime}$ is in $(\mathcal{S}_{1}, \mathcal{S}_{2})$. 
Next, in the middle column of the second diagram, 
we have the short exact sequence such that $P^{\prime}$ is in $(\mathcal{S}_{1}, \mathcal{S}_{2})$ and $N/T$ is in $\mathcal{S}_{2}$.  
Therefore, it follows from Lemma \ref{comp} that $P$ is in $(\mathcal{S}_{1}, \mathcal{S}_{2})$. 
Finally, in the middle column of the first diagram, 
there exists the short exact sequence such that $S$ is in $\mathcal{S}_{1}$ and $P$ is in $(\mathcal{S}_{1}, \mathcal{S}_{2})$.  
Consequently, we see that $M$ is in $(\mathcal{S}_{1}, \mathcal{S}_{2})$ by Lemma \ref{comp}. 

The proof is completed.
\end{proof}

\vspace{5pt}
In the rest of this section, 
we shall give several examples of Serre subcategory $(\mathcal{S}_{1}, \mathcal{S}_{2})$ of $R\text{-}\mathrm{Mod}$. 
The first example is a generalization of \cite[Lemma 2.1]{BN-2008} which states that a subcategory $(\mathcal{S}_{f. g. }, \mathcal{S}_{Artin })$ is a Serre subcategory.

\begin{corollary}\label{FGSerre}
A subcategory $(\mathcal{S}_{f. g. }, \mathcal{S})$ is a Serre subcategory for a Serre subcategory $\mathcal{S}$ of $R\text{-}\mathrm{Mod}$. 
\end{corollary}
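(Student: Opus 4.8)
The plan is to invoke Theorem \ref{serre} with $\mathcal{S}_{1}=\mathcal{S}_{f. g. }$ and $\mathcal{S}_{2}=\mathcal{S}$. By that theorem, $(\mathcal{S}_{f. g. }, \mathcal{S})$ is a Serre subcategory if and only if $(\mathcal{S}, \mathcal{S}_{f. g. })\subseteq (\mathcal{S}_{f. g. }, \mathcal{S})$, so the whole task reduces to establishing this single inclusion. This reformulation is the decisive move: it replaces the ``closed under extensions'' requirement with a concrete containment of two explicitly described subcategories.

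To verify the inclusion, I would start with an arbitrary $M\in(\mathcal{S}, \mathcal{S}_{f. g. })$ and unwind the definition: there is a short exact sequence $0\to S\to M\xrightarrow{\pi} F\to 0$ with $S\in\mathcal{S}$ and $F$ finitely generated. The goal is then to produce a \emph{different} presentation of $M$, namely a short exact sequence $0\to F'\to M\to S'\to 0$ with $F'$ finitely generated and $S'\in\mathcal{S}$, which is exactly what membership in $(\mathcal{S}_{f. g. }, \mathcal{S})$ demands.

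The key step, and the only place where any real work happens, is the construction of $F'$. Since $F$ is finitely generated, I would pick finitely many generators of $F$ and lift them along the surjection $\pi$ to elements $x_{1},\dots,x_{n}\in M$; then set $F'=Rx_{1}+\cdots+Rx_{n}$, a finitely generated submodule of $M$. By construction the composite $F'\hookrightarrow M\xrightarrow{\pi}F$ is surjective, which forces $M=F'+S$ (identifying $S$ with $\ker\pi$). Hence $M/F'=(F'+S)/F'\cong S/(S\cap F')$, and since $\mathcal{S}$ is a Serre subcategory and $S\in\mathcal{S}$, this quotient lies in $\mathcal{S}$. Taking $S'=M/F'$ yields the required short exact sequence $0\to F'\to M\to M/F'\to 0$, so $M\in(\mathcal{S}_{f. g. }, \mathcal{S})$, completing the inclusion.

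I expect no genuine obstacle: the lifting-of-generators argument is routine, and the isomorphism $M/F'\cong S/(S\cap F')$ is a standard second-isomorphism-theorem computation. The one point to state carefully is that surjectivity of $F'\to F$ really does give $M=F'+S$ rather than merely $F'\subseteq M$; once that identity is in place, the fact that $\mathcal{S}$ is closed under quotients closes the argument at once.
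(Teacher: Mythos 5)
Your proposal is correct and follows essentially the same route as the paper: reduce via Theorem \ref{serre} to the inclusion $(\mathcal{S}, \mathcal{S}_{f.g.}) \subseteq (\mathcal{S}_{f.g.}, \mathcal{S})$, then lift generators of the finitely generated quotient to produce a finitely generated submodule $F'$ with $M = F' + S$. The only cosmetic difference is the last step: the paper concludes by noting $M$ is a homomorphic image of $F' \oplus S \in (\mathcal{S}_{f.g.}, \mathcal{S})$ and invoking Proposition \ref{cusq}, whereas you verify membership directly via $M/F' \cong S/(S \cap F') \in \mathcal{S}$; both are valid.
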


\begin{proof}
Let $\mathcal{S}$ be a Serre subcategory of $R\text{-}\mathrm{Mod}$. 
To prove our assertion, it is enough to show that one has $(\mathcal{S}, \mathcal{S}_{f. g. }) \subseteq (\mathcal{S}_{f. g. }, \mathcal{S})$ by Theorem \ref{serre}. 
Let $M$ be in $(\mathcal{S}, \mathcal{S}_{f. g. })$. 
Then there exists a short exact sequence $0 \to L \to M \to M/L \to 0$ of $R$-modules where $L$ is in $\mathcal{S}$ such that $M/L$ is in $\mathcal{S}_{f. g. }$. 
It is easy to see that there exists a finitely generated $R$-submodule $K$ of $M$ such that $M=K+L$. 
Since $K \oplus L$ is in $(\mathcal{S}_{f. g. }, \mathcal{S})$ and $M$ is a homomorphic image of $K\oplus L$, 
$M$ is in $(\mathcal{S}_{f. g. }, \mathcal{S})$ by Proposition \ref{cusq}. 
\end{proof}

%
%

\begin{example}
Let $R$ be a domain but not a field and let $Q$ be a field of fractions of $R$. 
We denote by $\mathcal{S}_{Tor}$ a subcategory consisting of torsion $R$-modules, 
that is 
\[\mathcal{S}_{Tor}=\{M\in R\text{-}\mathrm{Mod} \mid M \otimes_{R} Q=0 \}.\]

\vspace{2pt}
\noindent 
Then we shall see that one has   
\[ (\mathcal{S}_{Tor}, \mathcal{S}_{f. g. }) \subsetneqq (\mathcal{S}_{f. g. }, \mathcal{S}_{Tor})=\{M\in R\text{-}\mathrm{Mod} \mid \mathrm{dim}_{Q}\, M \otimes_{R} Q < \infty \}. \]

\vspace{2pt}
\noindent
Therefore, a subcategory $(\mathcal{S}_{f. g. }, \mathcal{S}_{Tor})$ is a Serre subcategory by Corollary \ref{FGSerre}, 
but a subcategory $(\mathcal{S}_{Tor}, \mathcal{S}_{f. g. })$ is not closed under extensions by Theorem \ref{serre}. 

\vspace{3pt}
First of all, 
we shall show that the above equality holds.  
We suppose that $M$ is in $(\mathcal{S}_{f. g. }, \mathcal{S}_{Tor})$. 
Then there exists a short exact sequence 
\[ 0 \to L \to M \to N \to 0 \]
of $R$-modules where $L$ is in $\mathcal{S}_{f. g. }$ and $N$ is in $\mathcal{S}_{Tor}$. 
We apply an exact functor $-\otimes_{R}Q$ to this sequence. 
Then we see that one has $M\otimes_{R}Q \cong L \otimes_{R}Q$ and this module is a finite dimensional $Q$-vector space. 

Conversely, 
let $M$ be an $R$-module with $\mathrm{dim}_{Q}\, M\otimes_{R}Q<\infty$. 
Then we can denote $M \otimes_{R} Q=\sum^n_{i=1} Q(m_{i} \otimes \displaystyle 1_{Q})$ with $m_{i} \in M$ and the unit element $1_{Q}$ of $Q$.  
We consider a short exact sequence
\[ 0 \to \sum^{n}_{i=1}Rm_{i} \to M \to M/\sum^{n}_{i=1}Rm_{i} \to 0\]
of $R$-modules. 
It is clear that $\sum^{n}_{i=1}Rm_{i}$ is in $\mathcal{S}_{f. g. }$ and $M/\sum^{n}_{i=1}Rm_{i}$ is in $\mathcal{S}_{Tor}$. 
So $M$ is in  $(\mathcal{S}_{f. g. }, \mathcal{S}_{Tor})$. 
Consequently, the above equality holds. 

\vspace{3pt}
Next, it is clear that $M \otimes_{R} Q$ has finite dimension as $Q$-vector space for an $R$-module $M$ of $(\mathcal{S}_{Tor}, \mathcal{S}_{f. g. })$. 
Thus, one has $(\mathcal{S}_{Tor}, \mathcal{S}_{f. g. }) \subseteq (\mathcal{S}_{f. g. }, \mathcal{S}_{Tor})$. 

\vspace{4pt}
Finally, we shall see that a field of fractions $Q$ of $R$ is in $(\mathcal{S}_{f. g. }, \mathcal{S}_{Tor})$ but not in $(\mathcal{S}_{Tor}, \mathcal{S}_{f. g. })$, 
so one has $(\mathcal{S}_{Tor}, \mathcal{S}_{f. g. }) \subsetneqq (\mathcal{S}_{f. g. }, \mathcal{S}_{Tor})$. 
\vspace{3pt}
Indeed, it follows from $\mathrm{dim}_{Q}\, Q \otimes _{R} Q=1$ that $Q$ is in $(\mathcal{S}_{f. g. }, \mathcal{S}_{Tor})$. 
On the other hand, we assume that $Q$ is in $(\mathcal{S}_{Tor}, \mathcal{S}_{f. g. })$. 
Since $R$ is a domain, a torsion $R$-submodule of $Q$ is only the zero module. 
It means that $Q$ must be a finitely generated $R$-module. 
But, this is a contradiction. 
\end{example}

%
%
\vspace{5pt}
We note that a subcategory $\mathcal{S}_{Artin }$ consisting of Artinian $R$-modules is a Serre subcategory which is closed under injective hulls. (Also see \cite[Example 2.4]{AM-2008}.)  
Therefore  
we can see that a subcategory $(\mathcal{S}, \mathcal{S}_{Artin })$ is also Serre subcategory for a Serre subcategory of $R\text{-}\mathrm{Mod}$ by the following assertion. 


\begin{corollary}\label{CISerre}
Let $\mathcal{S}_{2}$ be a Serre subcategory of $R\text{-}\mathrm{Mod}$ which is closed under injective hulls. 
Then a subcategory $(\mathcal{S}_{1}, \mathcal{S}_{2})$ is a Serre subcategory for a Serre subcategory $\mathcal{S}_{1}$ of $R\text{-}\mathrm{Mod}$. 
\end{corollary}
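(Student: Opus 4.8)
The plan is to reduce to the criterion of Theorem \ref{serre}. Since $(\mathcal{S}_{1}, \mathcal{S}_{2})$ is already closed under submodules and quotient modules by Proposition \ref{cusq}, it suffices to verify the single inclusion $(\mathcal{S}_{2}, \mathcal{S}_{1}) \subseteq (\mathcal{S}_{1}, \mathcal{S}_{2})$. So I would begin with an arbitrary $M \in (\mathcal{S}_{2}, \mathcal{S}_{1})$, fix a short exact sequence $0 \to S_{2} \to M \xrightarrow{\pi} S_{1} \to 0$ with $S_{i} \in \mathcal{S}_{i}$, and aim to produce a \emph{different} presentation of $M$ exhibiting it as an extension of an $\mathcal{S}_{2}$-module by an $\mathcal{S}_{1}$-submodule, i.e.\ with the roles of the two Serre subcategories swapped.

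The key device is the hypothesis that $\mathcal{S}_{2}$ is closed under injective hulls. I would set $E = E_{R}(S_{2})$, so that $E$ lies in $\mathcal{S}_{2}$. Regarding $S_{2}$ simultaneously as a submodule of $M$ (via the given sequence) and as a submodule of its injective hull $E$, the injectivity of $E$ lets me extend the inclusion $S_{2} \hookrightarrow E$ along the monomorphism $S_{2} \hookrightarrow M$ to a homomorphism $f \colon M \to E$ whose restriction to $S_{2}$ is the inclusion.

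With $f$ in hand, both halves of the desired sequence should fall out of $0 \to \mathrm{Ker}\, f \to M \to \mathrm{Im}\, f \to 0$. Because $f|_{S_{2}}$ is a monomorphism, one has $\mathrm{Ker}\, f \cap S_{2} = 0$; since $S_{2} = \mathrm{Ker}\, \pi$, the restriction $\pi|_{\mathrm{Ker}\, f}$ is then injective, so $\mathrm{Ker}\, f$ embeds into $S_{1}$ and hence lies in $\mathcal{S}_{1}$ (closure under submodules). On the other side, $M/\mathrm{Ker}\, f \cong \mathrm{Im}\, f$ is a submodule of $E \in \mathcal{S}_{2}$, so it lies in $\mathcal{S}_{2}$. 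This displays $M$ as an extension of an $\mathcal{S}_{2}$-module by an $\mathcal{S}_{1}$-submodule, giving $M \in (\mathcal{S}_{1}, \mathcal{S}_{2})$ and completing the inclusion.

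I expect the main obstacle to be conceptual rather than computational: the crucial idea is to use injectivity of the hull to manufacture the map $f$ whose kernel is transverse to $S_{2}$ and therefore injects into the $\mathcal{S}_{1}$-quotient $S_{1}$. Once $f$ is produced, the two membership checks are routine applications of closure under submodules, and the only point demanding care is the verification that $\mathrm{Ker}\, f \cap S_{2} = 0$. Finally, since $\mathcal{S}_{Artin}$ is a Serre subcategory closed under injective hulls, the promised fact that $(\mathcal{S}, \mathcal{S}_{Artin})$ is Serre will follow as the special case $\mathcal{S}_{2} = \mathcal{S}_{Artin}$.
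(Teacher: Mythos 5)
Your proof is correct, and it rests on the same two pillars as the paper's: the reduction via Theorem \ref{serre} to the single inclusion $(\mathcal{S}_{2}, \mathcal{S}_{1}) \subseteq (\mathcal{S}_{1}, \mathcal{S}_{2})$, and the use of the injectivity of $E_{R}(S_{2}) \in \mathcal{S}_{2}$ to manufacture a map out of $M$. Where you differ is the finishing step. The paper forms the pushout of $0 \to S_{2} \to M \to S_{1} \to 0$ along $S_{2} \hookrightarrow E_{R}(S_{2})$; the bottom row of the pushout diagram splits because $E_{R}(S_{2})$ is injective, which yields an embedding $M \hookrightarrow S_{1} \oplus E_{R}(S_{2})$, and since $S_{1} \oplus E_{R}(S_{2})$ visibly lies in $(\mathcal{S}_{1}, \mathcal{S}_{2})$, the paper concludes by closure under submodules (Proposition \ref{cusq}). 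You instead extend the inclusion $S_{2} \hookrightarrow E_{R}(S_{2})$ over $M$ to a map $f \colon M \to E_{R}(S_{2})$ and exhibit the defining short exact sequence for $M$ directly: $\mathrm{Ker}\, f \cap S_{2} = 0$ forces $\pi|_{\mathrm{Ker}\, f}$ to be injective, so $\mathrm{Ker}\, f$ embeds into $S_{1}$ and lies in $\mathcal{S}_{1}$, while $M/\mathrm{Ker}\, f \cong \mathrm{Im}\, f \subseteq E_{R}(S_{2})$ lies in $\mathcal{S}_{2}$. The two arguments are mathematically interchangeable --- the splitting of the paper's pushout row is precisely the existence of your $f$, and your observation that $\mathrm{Ker}\, f \cap \mathrm{Ker}\, \pi = 0$ amounts to the injectivity of $(\pi, f) \colon M \to S_{1} \oplus E_{R}(S_{2})$, which is the paper's embedding --- but your version is slightly more economical, producing the required presentation of $M$ on the nose rather than routing through Proposition \ref{cusq}, whose own proof requires a further diagram chase.
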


\begin{proof}
By Theorem \ref{serre}, 
it is enough to show that one has $(\mathcal{S}_{2}, \mathcal{S}_{1}) \subseteq (\mathcal{S}_{1}, \mathcal{S}_{2})$. 

We assume that $M$ is in $(\mathcal{S}_{2}, \mathcal{S}_{1})$ and shall show that $M$ is in $(\mathcal{S}_{1}, \mathcal{S}_{2})$. 
Then there exists a short exact sequence 
\[0 \to S_{2} \to M  \to S_{1} \to 0 \]
of $R$-modules where $S_{1}$ is in $\mathcal{S}_{1}$ and $S_{2}$ is in $\mathcal{S}_{2}$.  
Since $\mathcal{S}_{2}$ is closed under injective hulls, we note that the injective hull $E_{R}(S_{2})$ of $S_{2}$ is also in $\mathcal{S}_{2}$. 
We consider a push out diagram

\[ \begin{CD}
0 @>>> S_{2} @>>> M @>>> S_{1} @>>> 0 \\
@.   @VVV @VVV    \parallel @.\\
0 @>>> E_{R}(S_{2})  @>>> X @>>> S_{1} @>>> 0 \\
\end{CD} \]

\vspace{3pt}
\noindent
of $R$-modules with exact rows and injective vertical maps. 
The second exact sequence splits, and we have an injective homomorphism $M \to S_{1} \oplus E_{R}(S_{2})$. 
Since there is a short exact sequence 
\[ 0 \to S_{1} \to S_{1} \oplus E_{R}(S_{2}) \to E_{R}(S_{2}) \to 0\]
of $R$-modules, the $R$-module $S_{1} \oplus E_{R}(S_{2})$ is in $(\mathcal{S}_{1}, \mathcal{S}_{2})$. 
Consequently, we see that $M$ is also in $(\mathcal{S}_{1}, \mathcal{S}_{2})$ by Proposition \ref{cusq}. 

The proof is completed.  
\end{proof}


\begin{remark}
If subcategories $\mathcal{S}_{1}$ and $\mathcal{S}_{2}$ are Serre subcategories of $R\text{-}\mathrm{Mod}$ which are closed under injective hulls,  
then we can see that a subcategory $(\mathcal{S}_{1}$, $\mathcal{S}_{2})$ is Serre by Corollary \ref{CISerre} and closed under injective hulls as follows.

Let $M$ be in $(\mathcal{S}_{1}, \mathcal{S}_{2})$ and 
we shall prove that $E_{R}(M)$ is also in $(\mathcal{S}_{1}, \mathcal{S}_{2})$.
There exists a short exact sequence $0 \to L \to M \to N \to 0$ of $R$-modules where $L$ is in $\mathcal{S}_{1}$ and $N$ is in $\mathcal{S}_{2}$. 
We consider a commutative diagram 

$$  
\begin{CD}
0 @>>> L @>{\varphi}>>M  @>{\psi}>> N @>>> 0\\
@.  @VV{\sigma }V  @VV{\eta} V @VV{\tau }V \\
0 @>>>E_{R}(L)   @>>> E_{R}(L) \oplus E_{R}(N) @>>> E_{R}(N) @>>> 0 \\
\end{CD} $$

\vspace{3pt}
\noindent
of $R$-modules with exact rows and injective vertical maps. 
(For $m \in M$, 
we define $\eta (m)=(\mu (m), \tau \circ \psi (m))$ 
where $\mu :M \to E_{R}(L)$ is a homomorphism induced by the injectivity of $E_{R}(L)$ such that $\sigma=\mu \circ \varphi$.) 
Since $E_{R}(L) \oplus E_{R}(N)$ is in $(\mathcal{S}_{1}, \mathcal{S}_{2})$ and $E_{R}(M)$ is a direct summand of $E_{R}(L) \oplus E_{R}(N)$, 
$E_{R}(M)$ is in $(\mathcal{S}_{1}, \mathcal{S}_{2})$ by Proposition \ref{cusq}. 
\end{remark}

\vspace{7pt}
%
%
%
\section{On conditions $(C_{I})$ for a subcategory $(\mathcal{S}_{1}, \mathcal{S}_{2})$}

Let $I$ be an ideal of $R$. 
The following condition $(C_{I})$ was defined for Serre subcategories of $R\text{-}\mathrm{Mod}$ by M.\ Aghapouranahr and L.\ Melkersson in \cite{AM-2008}. 
In this section, 
we shall consider the condition $(C_{I})$ for subcategories of $R\text{-}\mathrm{Mod}$ and try to apply the notion of subcategories $(\mathcal{S}_{1}, \mathcal{S}_{2})$ to the local cohomology theory. 

\begin{definition}
Let $\mathcal{X}$ be a subcategory of $R\text{-}\mathrm{Mod}$ and $I$ be an ideal of $R$. 
We say that $\mathcal{X}$ satisfies the condition $(C_{I})$ if the following condition satisfied: 
\begin{center}
($C_{I}$) \hspace{5pt} If $M=\varGamma_{I}(M)$ and $(0 :_{M} I)$ is in $\mathcal{X}$, then $M$ is in $\mathcal{X}$. 
\end{center}
\end{definition}

\vspace{5pt}
If a subcategory $(\mathcal{S}_{1},\mathcal{S}_{2})$ is a Serre subcategory, 
we have already seen that one has $(\mathcal{S}_{2},\mathcal{S}_{1}) \subseteq (\mathcal{S}_{1},\mathcal{S}_{2})$ in Theorem \ref{serre}. 
First of all, we shall see that the condition $(C_{I})$ for a subcategory $(\mathcal{S}_{2}, \mathcal{S}_{1})$ induces the condition $(C_{I})$ for a Serre subcategory $(\mathcal{S}_{1}, \mathcal{S}_{2})$. 

\vspace{3pt} 
The following lemma is a well-known fact.

\begin{lemma}\label{ext}
Let $\mathcal{X}$ be a full subcategory of $R\text{-}\mathrm{Mod}$ that is closed under finite direct sums, submodules and quotient modules. 
Suppose that $N$ is finitely generated and that $M$ is in $\mathcal{X}$.  
Then $\mathrm{Ext}^i_{R}(N,M)$ is in $\mathcal{X}$ for all integer $i$.
\end{lemma}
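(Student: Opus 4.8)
The plan is to compute the Ext-modules from a resolution of $N$ by finitely generated free modules and then to recognize each cohomology module as a subquotient of a finite direct sum of copies of $M$; the three closure hypotheses on $\mathcal{X}$ are exactly what is needed for the conclusion.

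First I would exploit the hypotheses that $R$ is noetherian and $N$ is finitely generated to fix a free resolution
\[ \cdots \to R^{n_2} \to R^{n_1} \to R^{n_0} \to N \to 0 \]
in which every term $R^{n_i}$ is a \emph{finitely generated} free $R$-module. The finiteness of each $n_i$ is the only place where the noetherian hypothesis on $R$ and the finite generation of $N$ are used: over a noetherian ring the syzygy modules of a finitely generated module remain finitely generated, so the resolution can be continued with finitely generated frees at every stage. This finiteness is essential, since $\mathcal{X}$ is assumed closed only under \emph{finite} direct sums.

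Next I would apply the functor $\mathrm{Hom}_{R}(-,M)$ to the deleted resolution and use the natural isomorphisms $\mathrm{Hom}_{R}(R^{n_i},M)\cong M^{n_i}$ to obtain a cochain complex
\[ 0 \to M^{n_0} \to M^{n_1} \to M^{n_2} \to \cdots \]
whose $i$-th cohomology is by definition $\mathrm{Ext}^{i}_{R}(N,M)$. Since $M$ lies in $\mathcal{X}$ and $\mathcal{X}$ is closed under finite direct sums, each term $M^{n_i}$ lies in $\mathcal{X}$.

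Finally I would write $\mathrm{Ext}^{i}_{R}(N,M)=\ker(d^{i})/\mathrm{im}(d^{i-1})$, where $d^{i}\colon M^{n_i}\to M^{n_{i+1}}$ denotes the induced differential. The kernel $\ker(d^{i})$ is a submodule of $M^{n_i}$, hence lies in $\mathcal{X}$ because $\mathcal{X}$ is closed under submodules; the quotient of $\ker(d^{i})$ by $\mathrm{im}(d^{i-1})$ then lies in $\mathcal{X}$ because $\mathcal{X}$ is closed under quotient modules. This yields $\mathrm{Ext}^{i}_{R}(N,M)\in\mathcal{X}$ for every $i\geq 0$, while for $i<0$ the module is zero and so belongs to $\mathcal{X}$ trivially (note $\mathcal{X}$ contains $M$ and hence its submodule $0$). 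I do not expect any genuine obstacle here: the argument is purely formal once the finitely generated free resolution is in hand, which is why the statement is the advertised ``well-known fact.''
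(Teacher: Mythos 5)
Your proof is correct, and it is the standard argument. The paper itself gives no proof of this lemma --- it is introduced only with the phrase ``the following lemma is a well-known fact'' (it is essentially Lemma 2.5 of the cited paper of Aghapournahr and Melkersson) --- so there is no in-paper argument to diverge from; your proof supplies exactly the intended one: use the noetherian hypothesis to resolve $N$ by finitely generated free modules, apply $\mathrm{Hom}_{R}(-,M)$ to get a complex with terms $M^{n_i}\in\mathcal{X}$, and realize $\mathrm{Ext}^{i}_{R}(N,M)$ as a subquotient of $M^{n_i}$, which lies in $\mathcal{X}$ by closure under finite direct sums, submodules and quotients; the remark that $\mathrm{Ext}^{i}=0\in\mathcal{X}$ for $i<0$ correctly handles the phrase ``for all integer $i$.''
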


\begin{theorem}\label{C_{I}}
Let $I$ be an ideal of $R$ and $\mathcal{S}_{1}, \mathcal{S}_{2}$ be Serre subcategories of $R\text{-}\mathrm{Mod}$. 
We suppose that $(\mathcal{S}_{1}, \mathcal{S}_{2})$ is a Serre subcategory of $R\text{-}\mathrm{Mod}$. 
If a subcategory $(\mathcal{S}_{2}, \mathcal{S}_{1})$ satisfies the condition $(C_{I})$, then $(\mathcal{S}_{1}, \mathcal{S}_{2})$ also satisfies the condition $(C_{I})$. 
\end{theorem}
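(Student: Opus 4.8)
The plan is to start from a module $M$ with $M=\varGamma_{I}(M)$ and $(0:_{M}I)\in(\mathcal{S}_{1},\mathcal{S}_{2})$, and deduce $M\in(\mathcal{S}_{1},\mathcal{S}_{2})$. Since $(0:_{M}I)$ lies in $(\mathcal{S}_{1},\mathcal{S}_{2})$, I would first fix a short exact sequence $0\to S_{1}\to(0:_{M}I)\to S_{2}\to 0$ with $S_{1}\in\mathcal{S}_{1}$ and $S_{2}\in\mathcal{S}_{2}$, and then pass to $\overline{M}=M/S_{1}$. Because $S_{1}$ is annihilated by $I$, the module $\overline{M}$ is again $I$-torsion, so the idea is to verify the hypothesis of the condition $(C_{I})$ for the subcategory $(\mathcal{S}_{2},\mathcal{S}_{1})$ at $\overline{M}$, invoke that condition to get $\overline{M}\in(\mathcal{S}_{2},\mathcal{S}_{1})$, and then transport the conclusion back along $0\to S_{1}\to M\to\overline{M}\to 0$. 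The transport uses the inclusion $(\mathcal{S}_{2},\mathcal{S}_{1})\subseteq(\mathcal{S}_{1},\mathcal{S}_{2})$ supplied by Theorem \ref{serre} (here the hypothesis that $(\mathcal{S}_{1},\mathcal{S}_{2})$ is Serre enters), followed by Lemma \ref{comp}(1) applied to $S_{1}\in\mathcal{S}_{1}$ and $\overline{M}\in(\mathcal{S}_{1},\mathcal{S}_{2})$.

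The heart of the matter is the computation of $(0:_{\overline{M}}I)$. Setting $M_{2}=\{x\in M\mid Ix\subseteq S_{1}\}$, I would identify $(0:_{\overline{M}}I)=M_{2}/S_{1}$, and note that the full socle $K=(0:_{M}I)$ satisfies $S_{1}\subseteq K\subseteq M_{2}$ and $(0:_{M_{2}}I)=K$. This produces a short exact sequence $0\to K/S_{1}\to M_{2}/S_{1}\to M_{2}/K\to 0$ whose left-hand term is exactly $S_{2}\in\mathcal{S}_{2}$. Hence it suffices to show the right-hand term $M_{2}/K$ lies in $\mathcal{S}_{1}$, for then $(0:_{\overline{M}}I)=M_{2}/S_{1}\in(\mathcal{S}_{2},\mathcal{S}_{1})$, which is precisely the input the condition $(C_{I})$ for $(\mathcal{S}_{2},\mathcal{S}_{1})$ requires.

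To control $M_{2}/K$ I would use the evaluation map $M_{2}\to\mathrm{Hom}_{R}(I,S_{1})$, $x\mapsto(a\mapsto ax)$, which is well defined since $Ix\subseteq S_{1}$ for $x\in M_{2}$, and whose kernel is $(0:_{M_{2}}I)=K$. This embeds $M_{2}/K$ into $\mathrm{Hom}_{R}(I,S_{1})$. As $R$ is noetherian, $I$ is finitely generated, so Lemma \ref{ext} in the case $i=0$ gives $\mathrm{Hom}_{R}(I,S_{1})\in\mathcal{S}_{1}$, and closure of $\mathcal{S}_{1}$ under submodules yields $M_{2}/K\in\mathcal{S}_{1}$, completing the reduction above. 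The remaining steps are then the formal assembly already described.

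I expect the main obstacle to be exactly this socle analysis: the naive guess that $(0:_{\overline{M}}I)$ equals $S_{2}$ fails, and the correct move is to split off only $S_{1}$ (not the whole socle) and to recognize that the discrepancy $M_{2}/K$ is measured by $\mathrm{Hom}_{R}(I,S_{1})$ through the evaluation map. Once that is seen, the proof is a routine combination of Lemma \ref{comp}, Theorem \ref{serre}, and Lemma \ref{ext}.
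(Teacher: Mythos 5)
Your proposal is correct, and its overall architecture is exactly that of the paper's proof: both split off the $\mathcal{S}_{1}$-part of the socle (your $S_{1}$, the paper's $L$), pass to $\overline{M}=M/S_{1}$, verify the hypothesis of $(C_{I})$ for $(\mathcal{S}_{2},\mathcal{S}_{1})$ at $\overline{M}$, and then transport back using Theorem \ref{serre} together with Lemma \ref{comp}(1). The only genuine difference is the device used to control $(0:_{\overline{M}}I)$. The paper applies $\mathrm{Hom}_{R}(R/I,-)$ to $0\to L\to M\to \overline{M}\to 0$ and reads off the exact sequence
\[ 0 \to (0:_{L}I) \to (0:_{M}I) \overset{\varphi}{\to} (0:_{\overline{M}}I) \to \mathrm{Ext}^{1}_{R}(R/I,L), \]
so that $(0:_{\overline{M}}I)$ is an extension of a submodule of $\mathrm{Ext}^{1}_{R}(R/I,L)\in\mathcal{S}_{1}$ (Lemma \ref{ext} with $i=1$) by $\mathrm{Im}(\varphi)\cong (0:_{M}I)/L\in\mathcal{S}_{2}$. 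You bound the same discrepancy term, your $M_{2}/K$, by embedding it via the evaluation map $x\mapsto(a\mapsto ax)$ into $\mathrm{Hom}_{R}(I,S_{1})\in\mathcal{S}_{1}$ (Lemma \ref{ext} with $i=0$). These are two descriptions of one and the same computation: identifying $\mathrm{Ext}^{1}_{R}(R/I,S_{1})$ with the cokernel of $\mathrm{Hom}_{R}(R,S_{1})\to\mathrm{Hom}_{R}(I,S_{1})$, the paper's connecting homomorphism is precisely the map induced by your evaluation map. What your version buys is elementariness: no Ext, no long exact sequence, just an explicit embedding, at the cost of introducing and checking the auxiliary submodule $M_{2}$ and the identity $(0:_{M_{2}}I)=K$ by hand. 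What the paper's version buys is brevity: the single $\mathrm{Hom}_{R}(R/I,-)$ sequence delivers both the $\mathcal{S}_{2}$-part $\mathrm{Im}(\varphi)$ and the $\mathcal{S}_{1}$-bound on the cokernel in one stroke.
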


\begin{proof}
Suppose that $M=\varGamma_{I}(M)$ and $(0 :_{M} I)$ is in $(\mathcal{S}_{1},\mathcal{S}_{2})$. 
Then we have to show that $M$ is in $(\mathcal{S}_{1}, \mathcal{S}_{2})$. 

Since $(0 :_{M} I)$ is in $(\mathcal{S}_{1}, \mathcal{S}_{2})$, 
there exists a short exact sequence 
\[ 0 \to L \to (0 :_{M} I) \to (0 :_{M} I)/L \to 0\]
of $R$-modules where $L$ is in $\mathcal{S}_{1}$ such that $(0 :_{M} I)/L$ is in $\mathcal{S}_{2}$. 
We consider a commutative diagram
$$  
\begin{CD}
0 @>>> L @>>>(0 :_{M} I)  @>>> (0:_{M} I)/L @>>> 0\\
@. \|  @.    @VVV @VVV \\
0 @>>>L   @>>> M @>>> M/L @>>> 0 \\
\end{CD} $$
of $R$-modules with exact rows. 
To prove our assertion, it is enough to show that
\[ M/L=\varGamma_{I}(M/L) \ \  \text{ and } \ \  (0 :_{M/L} I) \text{ is in } (\mathcal{S}_{2}, \mathcal{S}_{1}). \]
Indeed, it follows that $M/L$ is in $(\mathcal{S}_{2}, \mathcal{S}_{1})$ by the condition $(C_{I})$ for $(\mathcal{S}_{2}, \mathcal{S}_{1})$. 
Furthermore, since $(\mathcal{S}_{1}, \mathcal{S}_{2})$ is a Serre subcategory, 
one has $(\mathcal{S}_{2}, \mathcal{S}_{1}) \subseteq (\mathcal{S}_{1}, \mathcal{S}_{2})$ by Theorem \ref{serre}.  
Therefore, we see that $L$ is in $\mathcal{S}_{1}$ and $M/L$ is in $(\mathcal{S}_{1}, \mathcal{S}_{2})$, 
so it follows from the second exact sequence in a diagram that $M$ is in $(\mathcal{S}_{1}, \mathcal{S}_{2})$. 

The equality $M/L=\varGamma_{I}(M/L)$ is clear by $\mathrm{Supp}_{R}(M/L) \subseteq \mathrm{Supp}_{R}(M) \subseteq V(I)$.  
To see that $(0 :_{M/L} I) \text{ is in } (\mathcal{S}_{2}, \mathcal{S}_{1})$, 
we apply  a functor $\mathrm{Hom}_{R}(R/I, -)$ to  the short exact sequence 
\[ 0 \to L  \to M  \to  M/L \to  0.\]  
Then there exists an exact sequence
\[ 0 \to ( 0:_{L} I) \to (0 :_{M} I) \overset{\varphi}{\to} (0 :_{M/L} I) \to \mathrm{Ext}^{1}_{R}(R/I, L) \]
of $R$-modules.
It follows from $L \subseteq ( 0:_{M} I)$ that we have $( 0:_{L} I)=L$. 
Therefore, $\mathrm{Im} (\varphi)\cong (0:_{M} I)/L$ is in $\mathcal{S}_{2}$. 
Moreover, $\mathrm{Ext}^{1}_{R}(R/I, L)$ is in $S_{1}$ by Lemma \ref{ext}. 
Consequently, we see that $(0 :_{M/L} I)$ is in $(\mathcal{S}_{2}, \mathcal{S}_{1})$. 

The proof is completed. 
\end{proof}

%
%
\vspace{5pt} 
Finally, 
we try to apply the notion of $(\mathcal{S}_{1}, \mathcal{S}_{2})$ to the local cohomology theory. 
It seems that we can rewrite several results concerned with Serre subcategories in local cohomology theory. 
However, we shall only rewrite \cite[7.1.6 Theorem]{BS} as demonstration here.

\begin{proposition}
Let $(R, \mathfrak{m})$ be a local ring and $I$ be an ideal of $R$.  
For Serre subcategories $\mathcal{S}_{1}$ and $\mathcal{S}_{2}$ of $R\text{-}\mathrm{Mod}$ which contain a non-zero module, 
we suppose that a subcategory $(\mathcal{S}_{1}, \mathcal{S}_{2})$ satisfies the condition $(C_{I})$. 
Then $H^{\mathrm{dim}\, M}_{I}(M)$ is in $(\mathcal{S}_{1}, \mathcal{S}_{2})$ for a finitely generated $R$-module $M$. 
\end{proposition}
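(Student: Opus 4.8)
The plan is to verify the two hypotheses of the condition $(C_I)$ for the module $N := H^{\dim M}_I(M)$ and then invoke $(C_I)$ for $(\mathcal{S}_1,\mathcal{S}_2)$ directly. Writing $n := \dim M$, the first hypothesis $N = \varGamma_I(N)$ is immediate, since every local cohomology module $H^n_I(M)$ is $I$-torsion. Thus the whole proof reduces to showing that the $I$-socle $(0 :_N I) = \mathrm{Hom}_R(R/I, N)$ lies in $(\mathcal{S}_1,\mathcal{S}_2)$; once this is done, $(C_I)$ yields $N \in (\mathcal{S}_1,\mathcal{S}_2)$ at once.

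First I would record what the hypothesis that $\mathcal{S}_1$ and $\mathcal{S}_2$ each contain a non-zero module buys us over the local ring $(R,\mathfrak{m})$. If $\mathcal{S}$ is a Serre subcategory with a non-zero object $X$, then $\mathrm{Ass}_R(X) \neq \emptyset$, so $R/\mathfrak{p} \hookrightarrow X$ for some prime $\mathfrak{p}$ and hence $R/\mathfrak{p} \in \mathcal{S}$; since $\mathfrak{p} \subseteq \mathfrak{m}$, the quotient $R/\mathfrak{m}$ is an object of $\mathcal{S}$, and then closure under extensions forces every finite length $R$-module into $\mathcal{S}$. Applying this to both $\mathcal{S}_1$ and $\mathcal{S}_2$ and using $\mathcal{S}_1 \subseteq (\mathcal{S}_1,\mathcal{S}_2)$ from Remark \ref{basic}, I conclude that every finite length $R$-module belongs to $(\mathcal{S}_1,\mathcal{S}_2)$. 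Therefore it suffices to prove that $(0 :_N I)$ has finite length.

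To obtain finite length I would establish the two defining features separately. For the support I localise: for $\mathfrak{p} \in \mathrm{Supp}_R(M)$ with $\mathfrak{p} \neq \mathfrak{m}$ one has $\dim M_\mathfrak{p} \le \dim M - \dim R/\mathfrak{p} \le n-1$, so Grothendieck's vanishing theorem gives $N_\mathfrak{p} = H^n_{IR_\mathfrak{p}}(M_\mathfrak{p}) = 0$ (and $N_\mathfrak{p}=0$ trivially when $\mathfrak{p}\notin\mathrm{Supp}_R(M)$); hence $\mathrm{Supp}_R(N) \subseteq \{\mathfrak{m}\}$, and the same holds for the submodule $(0 :_N I)$. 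For finite generation I would invoke the local cohomology input that is being rewritten, namely that the top local cohomology module $H^{\dim M}_I(M)$ has a finitely generated $I$-socle $\mathrm{Hom}_R(R/I, N)$ (over the completion this follows from Matlis duality together with the computation of the attached primes of $N$, each attached prime $\mathfrak{p}$ satisfying $\dim R/(I+\mathfrak{p}) = 0$; this is the content underlying \cite[7.1.6 Theorem]{BS}). A module that is finitely generated and supported only at $\mathfrak{m}$ has finite length, so $(0 :_N I)$ has finite length, lies in $(\mathcal{S}_1,\mathcal{S}_2)$ by the previous paragraph, and $(C_I)$ then gives $N \in (\mathcal{S}_1,\mathcal{S}_2)$.

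The main obstacle is precisely the finiteness assertion in the last step: that the $I$-socle of the top local cohomology module is finitely generated (equivalently, of finite length once its support is known to be $\{\mathfrak{m}\}$). Artinianness of $N$ alone is \emph{not} enough here, since there exist Artinian $I$-torsion modules supported only at $\mathfrak{m}$ whose $I$-socle fails to be finitely generated; so the argument genuinely needs the special structure of $H^{\dim M}_I(M)$ (its attached primes, equivalently its $I$-cofiniteness), which is exactly the classical result imported from local cohomology theory. Everything else is formal manipulation with the condition $(C_I)$ together with the observation that $(\mathcal{S}_1,\mathcal{S}_2)$ contains all finite length $R$-modules.
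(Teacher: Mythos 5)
Your formal reductions are all correct: $H^{\dim M}_I(M)$ is $I$-torsion, your argument that $(\mathcal{S}_1,\mathcal{S}_2)$ contains every finite length $R$-module is sound (it is exactly the paper's base-case observation), and the localization argument giving $\mathrm{Supp}_R\bigl(H^{\dim M}_I(M)\bigr) \subseteq \{\mathfrak{m}\}$ works. The genuine gap is the step you yourself flag as the main obstacle: the claim that $(0:_N I)$ is finitely generated for $N = H^{\dim M}_I(M)$. You neither prove this nor cite a source that contains it. The attribution to \cite[7.1.6 Theorem]{BS} is incorrect: that theorem asserts only that $H^{\dim M}_I(M)$ is \emph{Artinian}, which, as you correctly point out, is not enough (an Artinian $I$-torsion module such as $E_R(R/\mathfrak{m})$ can have non-finitely generated $I$-socle when $\dim R/I > 0$). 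The statement you actually need --- finiteness of the $I$-socle, equivalently $I$-cofiniteness of this Artinian module --- is a true theorem, but its known proofs pass through the computation of the attached primes of the top local cohomology module over the completion together with the Lichtenbaum--Hartshorne vanishing theorem (to get $\dim \hat{R}/(I\hat{R}+\mathfrak{q})=0$ for each attached prime $\mathfrak{q}$) and Matlis duality. That is machinery strictly deeper than the proposition being proved, and since the proposition is intended precisely as a rewriting of \cite[7.1.6 Theorem]{BS}, outsourcing its key step to a strengthening of that theorem defeats the purpose. As written, the proof is incomplete.

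The paper avoids all of this by mimicking the proof of \cite[7.1.6 Theorem]{BS} itself: induct on $n=\dim M$. The case $n=0$ is your finite length observation. For $n>0$, replace $M$ by $M/\varGamma_I(M)$ (this changes no $H^i_I$ with $i>0$), so that $I$ contains an $M$-regular element $x$; the long exact sequence of
\[ 0 \to M \overset{x}{\to} M \to M/xM \to 0 \]
exhibits $(0:_{H^n_I(M)} x)$ as a homomorphic image of $H^{n-1}_I(M/xM)$, which lies in $(\mathcal{S}_1,\mathcal{S}_2)$ by the induction hypothesis since $\dim M/xM = n-1$. Closure of $(\mathcal{S}_1,\mathcal{S}_2)$ under quotient modules and submodules (Proposition \ref{cusq}, which needs no Serre hypothesis) then puts $(0:_{H^n_I(M)} I) \subseteq (0:_{H^n_I(M)} x)$ in $(\mathcal{S}_1,\mathcal{S}_2)$, and the condition $(C_I)$ concludes. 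In other words: the paper verifies the socle hypothesis of $(C_I)$ by induction and elementary closure properties, rather than by the (true but much harder) structural finiteness of the $I$-socle that your proposal presupposes. If you substitute this inductive argument for your unproved claim, your proof becomes correct and essentially coincides with the paper's.
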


\begin{proof}
We use induction on $n=\mathrm{dim}\, M$. 
If $n=0$, $M$ has finite length. 
It is clear that a Serre subcategory $\mathcal{S}$ contains all finite length $R$-modules if $\mathcal{S}$ contains a non-zero $R$-module. 
Therefore,  $\varGamma_{I}(M)$ is in $\mathcal{S}_{1} \cup \mathcal{S}_{2} \subseteq (\mathcal{S}_{1}, \mathcal{S}_{2})$. 

Now suppose that $n>0$ and we have established the result for finitely generated $R$-modules of dimension smaller than $n$. 
It is clear that $H^{i}_{I}(M) \cong H^{i}_{I}(M/\varGamma_{I}(M))$ for all $i>0$. 
Thus we may assume that $\varGamma_{I}(M)=0$. 
Then the ideal $I$ contains an $M$-regular element $x$. 
Therefore, there exists a short exact sequence 
\[ 0 \to M \overset{x}{\to} M \to M/xM \to 0, \] 
and this sequence induces an exact sequence
\[ H^{n-1}_{I}(M/xM)  \to H^{n}_{I}(M) \overset{x}{\to} H^{n}_{I}(M). \]
By the induction hypothesis, 
$H^{n-1}_{I}(M/xM)$ is in $(\mathcal{S}_{1}, \mathcal{S}_{2})$. 
Here, since $(\mathcal{S}_{1}, \mathcal{S}_{2})$ is closed under quotient modules, $(0:_{H^{n}_{I}(M)} x)$ is in $(\mathcal{S}_{1}, \mathcal{S}_{2})$. 
Furthermore, since $(0:_{H^{n}_{I}(M)} I) \subseteq (0:_{H^{n}_{I}(M)} x)$ and $(\mathcal{S}_{1}, \mathcal{S}_{2})$ is closed under submodules, 
$(0:_{H^{n}_{I}(M)} I)$ is in $(\mathcal{S}_{1}, \mathcal{S}_{2})$. 
It follows from the condition $(C_{I})$ for $(\mathcal{S}_{1}, \mathcal{S}_{2})$ that $H^{n}_{I}(M)$ is in $(\mathcal{S}_{1}, \mathcal{S}_{2})$. 

The proof is completed.  
\end{proof}

%
%
%
%


\vspace{7pt}

\end{document}